\theoremstyle{plain}
  \newtheorem{thm}{Theorem}[section]
  \newtheorem{prop}[thm]{Proposition}
  \newtheorem{lem}[thm]{Lemma}
  \newtheorem{cor}[thm]{Corollary}
  \newtheorem{conj}[thm]{Conjecture}
\theoremstyle{definition}
  \newtheorem{dfn}[thm]{Definition}
  \newtheorem{exmp}[thm]{Example}
  \newtheorem{rem}[thm]{Remark}
\def\theDcat{{\mathsf D}}
\def\Db{\theDcat^b} 
\let\opn\operatorname 
\newcommand\@bothmode[1]{\ifmmode #1\else $#1$ \fi}
\numberwithin{equation}{section}
\renewcommand\theenumi{\@roman\c@enumi}
\renewcommand{\@biblabel}[1]{#1.}
\newcommand\chdir[1]{%
\let\pwd\relax%
\def\pwd{#1}}
\newcommand\inputs[1]{\@for\@tempmember:=#1\do{\input{\pwd /\@tempmember}}}
\DeclareFontFamily{U}{futm}{}
\DeclareFontShape{U}{futm}{m}{n}{
  <-> s * [.92] fourier-bb
  }{}
\DeclareMathAlphabet{\math@bb}{U}{futm}{m}{n}
\let\mathbb\math@bb\relax
\DeclareMathAlphabet{\math@cal}{OMS}{cmsy}{m}{n}
\let\mathcal\math@cal\relax
\newcommand\NN{\mathbb{N}} 
\newcommand\ZZ{\mathbb{Z}} 
\newcommand\kk{\mathbb{k}} 
\newcommand\ba{\mathbf{a}}
\newcommand\bb{\mathbf{b}}
\newcommand\bc{\mathbf{c}}
\newcommand\be{\mathbf{e}}
\newcommand\one{\mathbf{1}}
\let\@tempar\relax 
\def\@seton^#1{\overset{#1}{\@tempar}}
\def\@setbeneath_#1{\underset{#1}{\@tempar}}
\newcommand\defar[2]{\@xp\def\csname #1\endcsname{
    \def\@tempar{#2}\@ifnextchar^{\@seton}{
        \@ifnextchar_{\@setbeneath}{\@tempar}
    }}}
\newcommand\imply{\@bothmode{\Rightarrow}} 
\newcommand\Imply{\@bothmode{\Longrightarrow}} 
\renewcommand\iff{\@bothmode\Leftrightarrow} 
\newcommand\Iff{\stackrel{\rm def}{\Longleftrightarrow}}
\newcommand\get{\@bothmode{\Leftarrow}} 
\newcommand\Get{\@bothmode{\Longleftarrow}} 
\newcommand\bra[1]{[#1]} 
\newcommand\mbra[1]{\left\{ #1 \right\}} 
\newcommand\set[2]{\mbra{\,#1\ \left|\ #2\, \right.}} 
\newcommand\defopn[1]{\@xp\def\csname #1\endcsname{\opn{\csname the#1\endcsname\relax}}}
\newcommand\ann{\opn{ann}}
\newcommand\depth{\opn{depth}_S}
\newcommand\sdepth{\opn{sdepth}_S}
\newcommand\sd{\opn{sd}}
\newcommand\pol{\opn{\mathsf{pol}_\ba}}
\newcommand\dpol{\opn{\mathsf{pol}^\ba}}
\newcommand\Pol{\opn{\mathsf{pol}_{\ba+\one}}}
\newcommand\dPol{\opn{\mathsf{pol}^{\ba+\one}}}
\newcommand\adeg{\opn{adeg}}
\newcommand\hdeg{\opn{hdeg}}
\newcommand\infl{\opn{\mathsf{Infl}}}
\newcommand\BM{\opn{\mathsf{BM}}}
\newcommand\BBM{\opn{\mathcal{B \! M}}}
\newcommand\seq[2][n]{{#2}_1,\dots ,{#2}_{#1}}
\newcommand\poly[2][\kk]{#1\bra{#2}}
\def\@polys[#1][#2]#3{\poly[#1]{\seq[#2]#3}}
\def\@polysx[#1]{\@ifnextchar[{\@polys[#1]}{\@polys[\kk][#1]}}
\def\polys{\@ifnextchar[{\@polysx}{\@polys[\kk][n]}}
\newcommand\fp{\mathfrak{p}}
\def\<{{\langle}}
\def\>{{\rangle}}
\def\tl{\triangleleft}
\def\tr{\triangleright}
\renewcommand\mod{\opn{mod}}
\def\0dt{\opn{mod}_{{\bf 0.dt}}}
\newcommand\Sq{\opn{Sq}}
\newcommand\wS{\widetilde{S}}
\newcommand\hS{\widehat{S}}
\newcommand\Ass{\opn{Ass}}
\newcommand\Hom{\opn{Hom}}
\newcommand\Tor{\opn{Tor}}
\newcommand\Ext{\opn{Ext}}
\newcommand\sA{\mathscr{A}}
\newcommand\DD{\mathbb{D}}
\newcommand\op{\mathsf{op}}
\newcommand\cD{\mathcal{D}}
\title[Sliding functor and polarization functor]
{Sliding functor and polarization functor for multigraded modules}
\author{Kohji Yanagawa}
\thanks{The author is partially supported by Grant-in-Aid for Scientific Research (c) (no.19540028).}
\address{Department of Mathematics, Kansai University,
Suita 564-8680, Japan}
\email{yanagawa@ipcku.kansai-u.ac.jp}
\begin{document}

\maketitle

\begin{abstract}
We define {\it sliding functors}, which are exact endofunctors of the category of 
multi-graded modules over a polynomial ring. 
They preserve several invariants of modules, especially  the 
(usual) depth and Stanley depth. In a similar way, we can also define  
the {\it polarization functor}. While this idea has appeared in 
papers of Bruns-Herzog and Sbarra, we give slightly different approach.    
Keeping these functors in mind, we treat simplicial spheres of Bier-Murai type.  
\end{abstract}

\section{Introduction} 
Let $S:=\kk[x_1, \ldots, x_n]$ be a polynomial ring over a field $\kk$, and let 
$\mod_{\ZZ^n} S$ denote the category of $\ZZ^n$-graded finitely generated $S$-modules. 

For $i \in [n]:=\{1, \ldots, n\}$ and $j \in \ZZ$, we define the {\it sliding functors} 
$(-)^{\tl\<i,j\>}$ and $(-)^{\tr\<i,j\>}$, 
which are exact endofunctors of $\mod_{\ZZ^n} S$ with 
$S(-\ba)^{\tl\<i,j\>} = S(-\ba')$ and $S(-\ba)^{\tr\<i,j\>} =S(-\ba'')$. 
Here, $a_k=a'_k=a''_k$ for all $k \ne i$ and 
$$
a'_i=
\begin{cases}
a_i + 1 & \text{if $a_i \geq j$,}\\
a_i & \text{if $a_i < j$,}
\end{cases}
\quad \text{and} \quad 
a''_i:=
\begin{cases}
a_i & \text{if $a_i > j$,} \\
a_i - 1 & \text{if $a_i \leq j$} 
\end{cases}
$$ 
(see the beginning of the next section for the convention on the vector $\ba \in \ZZ^n$). 
If $I$ is a monomial ideal and $j >0$, then $I^{\tl \<i,j\>}$ is a monomial ideal 
with $(S/I)^{\tl \<i,j\>}=S/I^{\tl\<i,j\>}$.  

Sliding functors preserve the depth and dimension.  
Moreover, we have  
$$\Ext_S^l(M^{\tl  \<i,j\>}, S) \cong 
\Ext_S^l(M, S)^{\tr  \<i,-j\>} \quad \text{and} \quad 
\Ext_S^l(M^{\tr  \<i,j\>}, S) \cong 
\Ext_S^l(M, S)^{\tl  \<i,-j\>}$$ 
for $M \in \mod_{\ZZ^n} S$.  
Hence these functors also preserve 
the sequentially Cohen-Macaulay property and Serre's condition $(S_r)$. 

The {\it Stanley depth} of $M \in \mod_{\ZZ^n} S$, which is denoted by $\sdepth M$, 
is a combinatorial invariant. 
It is conjectured that the inequality $\sdepth M \geq \depth M$ holds for all $M$ ({\it Stanley's conjecture}).  In Theorem~\ref{sdepth}, we will show that 
$$\sdepth M = \sdepth M^{\tl \<i,j\>}=\sdepth M^{\tr \<i,j\>}.$$
Hence Stanley's conjecture holds for $M$ if and only if it holds for $M^{\tl \<i,j\>}$ 
(or $M^{\tr \<i,j\>}$). 
It means that the conjecture holds for all monomial ideal if it holds for any 
monomial ideal $I$ satisfying the following condition:
\begin{itemize}
\item[$(*)$] Let $x^{\ba_1}, \ldots, x^{\ba_r}$ be the minimal generators of $I$.  
For each $i \in [n]$, there is a positive integer $b_i$ such that
$\{\, (\ba_l)_i \mid 1 \le l \le r \, \}$ is $\{ \, 1, 2, \ldots, b_i \, \}$  
or $\{ \, 0,1, \ldots, b_i \, \}$ 
(i.e., the degrees of minimal generators are ``consecutive").
\end{itemize}
Similarly, we can also reduce the problem to the case 
$\{\, (\ba_l)_i \mid  1 \le l \le r \, \}$ are arbitrary ``sparse".

Set $\one =(1, \ldots, 1) \in \NN^n$. For $\ba \in \NN^n$ with $\ba \succeq \one$, 
Miller (\cite{M}) 
introduced the notion of {\it positively $\ba$-determined $S$-modules}. 
They form the category $\mod_\ba S$, which is a full subcategory of $\mod_{\ZZ^n} S$. 
For a monomial ideal $I$ minimally generated by $x^{\bb_1}, \ldots, x^{\bb_r}$,  
we have $I \in \mod_\ba S$ if and only if $S/I \in \mod_\ba S$ 
if and only if $\bb_i \preceq \ba$ for all $i$.  
A positively $\one$-determined module is 
called a {\it squarefree module}. We denote the category of squarefree $S$-modules by $\Sq S$.   
A squarefree monomial ideal $I$ and its quotient ring $S/I$ (i.e., a Stanley-Reisner ring) 
are squarefree modules.  

Set $\wS:=\kk[\, x_{i,j} \mid 1 \leq i \leq n, 1 \leq j \leq a_i \,]$. 
{\it Polarization} is a classical technique constructing  
a squarefree monomial ideal $\pol(I) \subset \wS$ 
from a positively $\ba$ determined monomial ideal $I \subset S$. 
Extending this idea, we can define the functor $\pol: \mod_\ba S \to \Sq \wS$. 
The (essentially) same construction has already appeared in Bruns-Herzog \cite{BH} and 
Sbarra \cite{Sb}, but a new feature of the present paper is the ``reversed copy" 
$\dpol$ of $\pol$. Here 
$$\pol(x^\bb)=\prod_{i \in [n]} 
x_{i,1} x_{i,2} \cdots x_{i, b_i} \in \wS$$
and 
$$\dpol(x^\bb)=\prod_{i \in [n]} 
x_{i,a_i} x_{i,a_i-1} \cdots x_{i, a_i-b_i+1} \in \wS.$$  
(By abuse of notation, we identify a principal ideal with its generator in the above equations.)

We have 
$$\Ext_{\wS}^i(\pol(M), \wS(-\one)) \cong \dpol(\Ext_S^i(M,S(-\ba)))$$
for all $M \in \mod_\ba S$. 
This result corresponds to \cite[Corollary~4.10]{Sb}. 
However the pair $\pol$ and $\dpol$ is only implicit in \cite{Sb}, 
since a different convention is used there. This pair appears 
in a recent work of Murai \cite{Mu}, which we will discuss in \S5.  

Let $I \subset S$ be a squarefree monomial ideal 
(i.e., the Stanley-Reisner ideal of a simplicial complex $\Delta$), 
and set $S':= S[x_i']$ for some $i \in [n]$. 
In \cite{BH}, Bruns and Herzog considered the squarefree  monomial ideal $I' \subset S'$ given by 
replacing $x_i$ (appearing in the minimal generators of $I$) by $x_i \cdot x_i'$. 
In our context, $I'=\opn{\mathsf{pol}}_{\one+\be_i}(I^{\tl\<i,1\>})$. 
They called this operation (more precisely, the corresponding operation on $\Delta$) the 
{\it 1-vertex inflation}. 
In literature, the term ``an inflation of a simplicial complex" 
is sometime used in another meaning. So the reader should be careful.  

From a positively $\ba$-determined monomial ideal $I$ of $S$, 
Murai \cite{Mu} constructed the squarefree monomial ideal 
$\BM_\ba(I)$ of $\kk[\, x_{i,j} \mid 1 \leq i \leq n, 1 \leq j \leq a_i +1\,]$, which is the 
Stanley-Reisner ideal of  a simplicial sphere $\BBM_\ba(I)$ of dimension $-2 +\sum_{i=1}^n a_i$. 
If $I$ itself is squarefree, this construction is due to T. Bier (unpublished).   
Hence, we call simplicial complexes of the form $\BBM_\ba(I)$ {\it Bier-Murai spheres}. 
In Theorem~\ref{BM},  we show that 
the class of Bier-Murai spheres is closed under 1-vertex inflations.  

\section*{Acknowledgment.} The author is grateful to Professors Gunnar Fl\o ystad, 
Satoshi Murai and Naoki Terai 
for valuable information. 

\section{Sliding functors}
We introduce the convention on $\ZZ^n$ used throughout the paper. 
The $i^{\rm th}$ coordinate of $\ba \in \ZZ^n$ is denote by $a_i$ 
(i.e., we change the font). 
However, for a vector represented by several letters, such as $\ba_j \in \ZZ^n$, 
we denote its $i^{\rm th}$ coordinate by $(\ba_j)_i$. 
For $i \in [n]:=\{1, \ldots, n \}$, 
$\be_i$ denotes the $i^{\rm th}$ unit vector of $\ZZ^n$, that is, $(\be_i)_j=\delta_{i,j}$.  
Let $\succeq$ be the order on $\ZZ^n$ defined by 
$\ba \succeq \bb \ \Iff \ a_i \ge b_i$ for all $i$. Let $\ba \vee \bb$ denote 
the element of $\ZZ^n$ whose $i^{\text{th}}$-coordinate is $\max\mbra{a_i,b_i}$ for each $i$. 
Let $S:=\kk[x_1, \ldots, x_n]$ be a polynomial ring. 
For $\ba \in \NN^n$, $x^\ba$ denotes the monomial $\prod_{i \in [n]}x^{a_i}$ in $S$. 
Clearly, $S$ is a $\ZZ^n$-graded ring with $S=\bigoplus_{\ba \in \NN^n} \kk x^\ba$.

Let $\mod_{\ZZ^n} S$ be the category of finitely generated $\ZZ^n$-graded $S$-modules 
and their degree preserving $S$-homomorphisms. For $M \in \mod_{\ZZ^n} S$ and $\ba \in \ZZ^n$, 
$M(\ba)$ denotes the shifted module with $M(\ba)_\bb=M_{\ba+\bb}$, as usual. 
We say $M= \bigoplus_{\ba \in \ZZ^n} M_\ba$ is $\NN^n$-graded, if $M_\ba =0$ for all 
$\ba \not \in \NN^n$. Let $\mod_{\NN^n} S$ be the full subcategory of $\mod_{\ZZ^n} S$ 
consisting of $\NN^n$-graded modules. Recall that if $M,N \in \mod_{\ZZ^n} S$ then 
$\Ext_S^i(M,N) \in \mod_{\ZZ^n} S$ in the natural way.  As usual, for $M \in \mod_{\ZZ^n} S$, we call 
$\beta_{i,\ba}(M):=\dim_\kk [\Tor^S_i(\kk,N)]_\ba$ the $(i, \ba)^{\rm th}$ {\it Betti number} of $M$. 
We also treat ``coarser" Betti numbers   $\beta_{i,j}(M)$ and $\beta_i(M)$ 
(note that $M \in \mod_{\ZZ^n} S$ has the natural $\ZZ$-grading). 

From an order preserving map $q: \ZZ^n \to \ZZ^n$ and $M \in \mod_{\ZZ^n} S$, 
Brun and Fl\o ystad \cite{BF} constructed the new module 
$q^*M \in \mod_{\ZZ^n} S$ so that $(q^* M)_\ba = M_{q(\ba)}$ 
and the multiplication map $(q^*M)_{\ba} \ni y \longmapsto x^\bb y \in (q^*M)_{\ba+\bb}$ 
is given by $M_{q(\ba)} \ni y \longmapsto x^{q(\ba+\bb)-q(\ba)} y \in M_{q(\ba+\bb)}$ 
for all $\ba \in \ZZ^n$ and $\bb \in \NN^n$.  Similarly,  
for a morphism $f:M \to N$ in $\mod_{\ZZ^n} S$, we can define 
$q^*(f): q^*(M) \to q^*(N)$ so that $q^*(f)_\ba: q^*(M)_\ba \to q^*(N)_\ba$ 
coincides with $f_{q(\ba)}: M_{q(\ba)} \to N_{q(\ba)}$.  Clearly, this 
construction gives the functor $q^* : \mod_{\ZZ^n} S \to \mod_{\ZZ^n} S$. 
By \cite[Lemma~2.4]{BF}, this is an exact functor. 

Let $i \in [n]$ and $j \in \ZZ$. 
Define the order preserving map $\sigma_{\<i,j\>} :\ZZ^n \to \ZZ^n$ 
by $(\sigma_{\<i,j\>}(\ba))_k=a_k$ for all $k \ne i$ and 
$$
(\sigma_{\<i,j\>}(\ba))_i=\begin{cases}
a_i -1 & \text{if $a_i \geq j$,}\\
a_i & \text{if $a_i < j$.}
\end{cases}
$$
We denote the functor $(\sigma_{\<i,j\>})^* : \mod_{\ZZ^n} S \to \mod_{\ZZ^n} S$ 
by $(-)^{\tl \< i,j\>}$. 

We also define the map $\tau_{\<i,j\>}: \ZZ^n \to \ZZ^n$ by 
$\tau_{\<i,j\>}(\ba)_k = a_k$ for all $k \ne i$ and 
$$\tau_{\<i,j\>}(\ba)_i = \begin{cases}
a_i+1 & \text{if $a_i \geq j$,}\\
a_i & \text{if $a_i < j$.}
\end{cases}$$
If there is no danger of confusion, we simply denote $\sigma_{\<i, j\>}$ and $\tau_{\<i,j\>}$ 
by $\sigma$ and $\tau$ respectively. 
Note that $\sigma \circ \tau = \opn{Id}$, but $\tau \circ \sigma \ne \opn{Id}$ 
(in fact, if $a_i=j-1$, then $\sigma(\ba)=\sigma(\ba+\be_i)=\ba$). 
We have  $(M^{\tl \<i,j\>})_\ba = M_{\sigma(\ba)}$  and $M_\ba = (M^{\tl \<i,j\>})_{\tau(\ba)}$ 
for all $\ba \in \ZZ^n$.  However, if $a_i=j-1$ (in this case, $\sigma(\ba)=\tau(\ba)=\ba$), 
then  $M_\ba = (M^{\tl \<i,j\>})_{\ba} = (M^{\tl \<i,j\>})_{\ba+\be_i}$  
and the multiplication map 
$(M^{\tl \<i,j\>})_\ba \ni y \longmapsto x_i y \in (M^{\tl \<i,j\>})_{\ba+\be_i}$ is bijective. 
It is easy to see that $S(-\ba)^{\tl \<i,j\>} 
\cong S(-\tau(\ba))$. 

Let $I = (x^{\ba_1}, \ldots, x^{\ba_r})$ be a monomial ideal. 
If $j<0$, then $(S/I)^{\tl \<i,j\>} \cong (S/I)(-\be_i)$. 
So, when we treat $(S/I)^{\tl \<i,j\>}$ or $I^{\tl \<i,j\>} $,  
we assume that $j>0$.  Since $S^{\tl \<i,j\>}\cong S$ in this case  
and $(-)^{\tl \<i,j\>}$ is an exact functor,  we have 
$(S/I)^{\tl \<i,j\>} \cong S/I^{\tl \<i,j\>}$ and 
$I^{\tl \<i,j\>} = (x^{\tau(\ba_1)}, \ldots, x^{\tau(\ba_r)})$. 

\begin{rem}
In \cite{OY}, Okazaki and the author constructed the functor 
$(-)^{\tl \bb} : \mod_{\NN^n} S \to \mod_{\NN^n} S$ for $\bb \in \NN^n$.  
(In the context there, the restriction to $\mod_{\NN^n} S$ is essential. 
However, we can ignore this now.) It is easy to see that 
$(-)^{\tl \be_i} \cong (-)^{\tl \<i, 1\>}$ and 
$(-)^{\tl \bb}= ((-)^{\tl \<1,1\>})^{b_1} \circ \cdots \circ
((-)^{\tl \<n,1 \>})^{b_n}.$
In this sense, $(-)^{\tl \<i, j\>}$ is a generalization of $(-)^{\tl \bb}$.  
\end{rem}

\begin{prop}[{c.f. \cite[Proposition~5.2]{OY}}]\label{triangles preserve}
For $M \in \mod_{\ZZ^n} S$, we have 
$$\beta_{l, \, \ba} (M) = \beta_{l, \, \tau(\ba)} 
(M^{\tl \<i, j\>}) \ \text{for all $l \in \NN$ and $\ba \in \ZZ^n$,}$$
$$\depth M^{\tl \<i,j\>}= \depth M \quad \text{and} \quad 
\Ass M^{\tl \<i,j \>}= \Ass M.$$
In particular, $\dim_S M^{\tl \<i, j\>}= \dim_S M.$
\end{prop}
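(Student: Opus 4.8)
The plan is to push a \emph{minimal} $\ZZ^{n}$-graded free resolution of $M$ through the exact functor $(-)^{\tl\<i,j\>}=\sigma^{*}$ (with $\sigma=\sigma_{\<i,j\>}$, $\tau=\tau_{\<i,j\>}$, so that $\sigma\circ\tau=\opn{Id}$ and $S(-\ba)^{\tl\<i,j\>}\cong S(-\tau(\ba))$) and to read off everything from the resulting complex. Concretely, take a minimal $\ZZ^{n}$-graded free resolution $F_{\bullet}\to M$ with $F_{l}=\bigoplus_{\ba}S(-\ba)^{\beta_{l,\ba}(M)}$. Since $(-)^{\tl\<i,j\>}$ is exact and additive, $F_{\bullet}^{\tl\<i,j\>}$ is a $\ZZ^{n}$-graded free resolution of $M^{\tl\<i,j\>}$ with $F_{l}^{\tl\<i,j\>}\cong\bigoplus_{\ba}S(-\tau(\ba))^{\beta_{l,\ba}(M)}$, and it is \emph{still minimal}: a differential entry of $F_{\bullet}$, read as a graded map $S(-\ba)\to S(-\bb)$ between rank-one summands, is either zero or has $\ba\ne\bb$, and applying $\sigma^{*}$ produces a graded map $S(-\tau(\ba))\to S(-\tau(\bb))$ with $\tau(\ba)\ne\tau(\bb)$, as $\tau$ is injective (a bijection onto $\{\bc\mid c_{i}\ne j\}$); hence the new entry again lies in $\fm$. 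Reading off multiplicities of the summands $S(-\bb)$ gives $\beta_{l,\tau(\ba)}(M^{\tl\<i,j\>})=\beta_{l,\ba}(M)$ for all $l,\ba$ (and $\beta_{l,\bb}(M^{\tl\<i,j\>})=0$ when $b_{i}=j$). Summing over $\ba$ yields $\opn{pd}_{S}M=\opn{pd}_{S}M^{\tl\<i,j\>}$, so the Auslander--Buchsbaum formula gives $\depth M^{\tl\<i,j\>}=n-\opn{pd}_{S}M^{\tl\<i,j\>}=n-\opn{pd}_{S}M=\depth M$; and the equality $\dim_{S}M^{\tl\<i,j\>}=\dim_{S}M$ will follow from $\Ass M^{\tl\<i,j\>}=\Ass M$ via $\dim_{S}N=\max\{\dim S/\fp\mid\fp\in\Ass N\}$.

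For the associated primes, recall that every associated prime of a $\ZZ^{n}$-graded module is a monomial prime $\fp_{F}=(x_{k}\mid k\in F)$, and that $\fp_{F}\in\Ass N$ iff some homogeneous $y\in N\setminus\{0\}$ satisfies $x_{k}y=0$ for $k\in F$ and $x^{\bc}y\ne0$ whenever $\supp(\bc)\cap F=\void$. I would prove $\fp_{F}\in\Ass M\iff\fp_{F}\in\Ass M^{\tl\<i,j\>}$ by transporting such a witness along the explicit description of the functor. The $\kk$-linear identifications $M_{\bd}=(M^{\tl\<i,j\>})_{\tau(\bd)}$ (and, on $\{d_{i}=j-1\}$, also $M_{\bd}=(M^{\tl\<i,j\>})_{\bd+\be_{i}}$) intertwine multiplication by $x_{k}$ for $k\ne i$, and intertwine multiplication by $x_{i}$ \emph{except} that on $\{a_{i}=j-1\}$ the map $x_{i}\colon(M^{\tl\<i,j\>})_{\ba}\to(M^{\tl\<i,j\>})_{\ba+\be_{i}}$ is an isomorphism rather than multiplication by $x_{i}$. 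From a witness $y$ for $\fp_{F}\in\Ass M$ one gets, in degree $\tau(\deg y)$ (or $\deg y+\be_{i}$ when $(\deg y)_{i}=j-1$), a witness for $\fp_{F}\in\Ass M^{\tl\<i,j\>}$; for the converse one uses that $\sigma\circ\tau=\opn{Id}$ forces $\tau^{*}\circ(-)^{\tl\<i,j\>}=\opn{Id}$, so $\tau^{*}$ is an exact one-sided inverse of $(-)^{\tl\<i,j\>}$, and one transports a witness in $M^{\tl\<i,j\>}$ back to $M$ through $\tau^{*}$ in the same way. Once $\Ass M^{\tl\<i,j\>}=\Ass M$ is established, the dimension statement follows as noted.

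The steps that only involve $F_{\bullet}^{\tl\<i,j\>}$ (Betti numbers, depth, and the reduction of the dimension statement) are formal once one has exactness, $S(-\ba)^{\tl\<i,j\>}\cong S(-\tau(\ba))$, and injectivity of $\tau$. The genuine obstacle is the associated-primes step, and inside it the ``doubled'' hyperplane $\{a_{i}=j-1\}$, where $x_{i}$ acts invertibly on $M^{\tl\<i,j\>}$: a single element of $M$ then corresponds to elements of $M^{\tl\<i,j\>}$ in two adjacent degrees ($i$-th coordinate $j-1$ and $j$) with \emph{different} annihilators, so one must pick the right representative. For instance, a witness for $\fp_{F}\in\Ass M^{\tl\<i,j\>}$ sitting in $i$-th degree $j$ is annihilated by $x_{i}$, yet $\tau^{*}$ skips the value $j$ and kills the cyclic submodule it generates; one must instead pass to the larger submodule generated by the same element placed in $i$-th degree $j-1$, which still has $\fp_{F}$ as its only associated prime, and apply $\tau^{*}$ to that. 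Checking that these choices work out — together with the analogous $i\in F$ bookkeeping on the $\sigma^{*}$ side, where $(S/\fp_{F})^{\tl\<i,j\>}$ and its relevant shifts can acquire a $\kk[x_{i}]/(x_{i}^{2})$-type factor in the $i$-th direction but retain $\fp_{F}$ as their unique associated prime — is the technical heart of the argument.
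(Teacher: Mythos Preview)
Your proof is correct and follows the same strategy as the paper: push a minimal $\ZZ^n$-graded free resolution through the exact functor $(-)^{\tl\<i,j\>}$ (using $S(-\ba)^{\tl\<i,j\>}\cong S(-\tau(\ba))$ and injectivity of $\tau$ to preserve minimality) to read off Betti numbers and depth, and then transport homogeneous annihilator-witnesses between $M$ and $M^{\tl\<i,j\>}$ to get $\Ass M=\Ass M^{\tl\<i,j\>}$.

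One simplification worth adopting: for the inclusion $\Ass M^{\tl\<i,j\>}\subseteq\Ass M$ the paper does not pass through $\tau^{*}$ at all. Given $y\in(M^{\tl\<i,j\>})_{\ba}$ with $\ann(y)=\fp$, the element $y'\in M_{\sigma(\ba)}$ corresponding to $y$ already satisfies $\ann(y')=\fp$, with no case distinction at the degree-$j$ hyperplane; this is a short direct check using that the multiplication map $(M^{\tl\<i,j\>})_{\ba}\to(M^{\tl\<i,j\>})_{\ba+\bc}$ is, under the identification, multiplication by either $x^{\bc}$ or $x^{\bc-\be_i}$ on $M$. The ``doubled hyperplane'' subtlety you isolate arises only in the \emph{other} inclusion $\Ass M\subseteq\Ass M^{\tl\<i,j\>}$, and there the paper handles it exactly as you do: when $a_i=j-1$ and $x_i\in\fp$, one places the witness in degree $\ba+\be_i$ of $M^{\tl\<i,j\>}$ rather than in degree $\tau(\ba)=\ba$. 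So your detour through $\tau^{*}$ and the cyclic-submodule bookkeeping is valid but unnecessary.
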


\begin{proof}
Let  $F_\bullet$ be a $\ZZ^n$-graded minimal free resolution of $M$. 
Since  $(-)^{\tl \<i, j\>}$ is exact and $S(-\ba)^{\tl \<i,j\>} \cong S(-\tau(\ba))$, 
$(F_\bullet)^{\tl \<i, j\>}$ is a minimal free resolution of 
$M^{\tl \<i, j\>}$.  Hence the equations on the Betti numbers and  depths hold.

Recall that an associated prime of a $\ZZ^n$-graded module is always a monomial ideal. 
Assume that $\fp \in \Ass M^{\tl \<i, j\>}$. 
Then we can take a homogeneous element $y \in (M^{\tl \<i, j\>})_\ba$ 
with $\ann(y)=\fp$. 
It is easy to see that the element 
$y' \in M_{\sigma(\ba)}$ corresponding to $y$ 
satisfies  $\ann(y')=\fp$. Hence  $\fp \in \Ass M$. 
Conversely, assume that $\fp \in \Ass M$ and $\ann(y)=\fp$ for $y \in M_\ba$. 
If either $x_i \not \in \fp$ or $a_i \ne j-1$, the element $\dot{y} \in 
(M^{\tl \<i, j\>})_{\tau(\ba)}$ corresponding to $y$ satisfies 
$\ann(\dot{y})=\fp$. So let us consider the case  $x_i \in \fp$ and $a_i=j-1$. 
Then $\tau(\ba)=\ba$ and $(M^{\tl \<i,j\>})_{\ba} = (M^{\tl \<i,j\>})_{\ba+\be_i}
=M_\ba$, but the element $\dot{y} \in 
(M^{\tl \<i, j\>})_{\ba+\be_i}$ corresponding to 
$y$ satisfies $\ann(\dot{y})=\fp$. Hence $\fp \in \Ass M^{\tl \<i, j\>}$.
\end{proof}

The following is an immediate consequence of the above proposition. 

\begin{cor}\label{CM & Gor}
For $M \in \mod_{\ZZ^n} S$, $M$ is Cohen-Macaulay, if and only if so is 
$M^{\tl \<i, j\>}$.  For a monomial ideal $I$, $S/I$ is Gorenstein 
if and only if so is $S/I^{\tl \<i,j\>}$.  
\end{cor}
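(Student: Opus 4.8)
The plan is to treat the two assertions separately, deducing each directly from Proposition~\ref{triangles preserve}. For the first one there is nothing to prove beyond quoting the proposition: $M$ is Cohen--Macaulay exactly when $\depth M = \dim_S M$, and Proposition~\ref{triangles preserve} records $\depth M^{\tl\<i,j\>} = \depth M$ and $\dim_S M^{\tl\<i,j\>} = \dim_S M$, so $M$ satisfies this equality if and only if $M^{\tl\<i,j\>}$ does.

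For the Gorenstein statement, I would write $R := S/I$ and $R' := S/I^{\tl\<i,j\>}$, so that $R' \cong R^{\tl\<i,j\>}$ (recall $(S/I)^{\tl\<i,j\>}\cong S/I^{\tl\<i,j\>}$, as established before Proposition~\ref{triangles preserve}). By the part just proved, $R$ is Cohen--Macaulay if and only if $R'$ is; since a Gorenstein ring is Cohen--Macaulay, we may assume that both $R$ and $R'$ are Cohen--Macaulay. Then I would invoke the standard fact that a Cohen--Macaulay graded quotient ring $R=S/I$ is Gorenstein if and only if its Cohen--Macaulay type equals $1$, and that this type is $\beta_p(R)$, the rank of the last free module in the minimal $\ZZ^n$-graded $S$-free resolution $F_\bullet$ of $R$, where $p=\opn{pd}_S R = n - \dim_S R$. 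The corresponding statement holds for $R'$ with the \emph{same} integer $p$: indeed $\dim_S R' = \dim_S R$ by the first part, and, as noted in the proof of Proposition~\ref{triangles preserve}, $(F_\bullet)^{\tl\<i,j\>}$ is the minimal free resolution of $R'$, which has the same length as $F_\bullet$. So it is enough to show $\beta_p(R) = \beta_p(R')$.

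I would get this from Proposition~\ref{triangles preserve} together with one elementary observation. The proposition gives $\beta_{l,\ba}(R) = \beta_{l,\tau(\ba)}(R')$ for all $l$ and all $\ba\in\ZZ^n$, where $\tau=\tau_{\<i,j\>}$ is injective. Moreover, since each free module of $F_\bullet$ is a direct sum of copies of various $S(-\ba)$, each free module of $(F_\bullet)^{\tl\<i,j\>}$ is a direct sum of copies of $S(-\ba)^{\tl\<i,j\>}\cong S(-\tau(\ba))$; as $\tau(\ba)_i\ne j$ for every $\ba$, every multidegree $\bb$ with $\beta_{l,\bb}(R')\ne 0$ satisfies $b_i\ne j$, i.e. lies in the image of $\tau$. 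Combining these, for every $l$
$$\beta_l(R') = \sum_{\bb}\beta_{l,\bb}(R') = \sum_{\bb\,:\,b_i\ne j}\beta_{l,\bb}(R') = \sum_{\ba}\beta_{l,\tau(\ba)}(R') = \sum_{\ba}\beta_{l,\ba}(R) = \beta_l(R),$$
and in particular $\beta_p(R)=\beta_p(R')$. Hence $R$ and $R'$ have the same Cohen--Macaulay type, and $R$ is Gorenstein if and only if $R'$ is.

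The only place where a small argument is needed rather than a plain citation of Proposition~\ref{triangles preserve} is the passage from multigraded to total Betti numbers, i.e. the observation that $(-)^{\tl\<i,j\>}$ produces no Betti numbers in multidegrees $\bb$ with $b_i=j$; but this is forced by the shape $S(-\tau(\ba))$ of the free modules in the resolution of $R'$, so I do not expect a genuine obstacle. (Alternatively, once the isomorphism $\Ext^l_S(M^{\tl\<i,j\>},S)\cong\Ext^l_S(M,S)^{\tr\<i,-j\>}$ recalled in the introduction is available, one can instead read off the Gorenstein case from the Cohen--Macaulay case by applying that isomorphism with $l = n-\dim_S R$ and checking that it carries a shifted copy of $R$ to a shifted copy of $R'$.)
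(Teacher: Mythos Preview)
Your proof is correct and follows the paper's approach: the paper simply declares the corollary an ``immediate consequence'' of Proposition~\ref{triangles preserve}, and what you have written is exactly a spelled-out version of that deduction. Your passage from multigraded to total Betti numbers is slightly more elaborate than necessary---once you know $(F_\bullet)^{\tl\<i,j\>}$ is the minimal free resolution of $R'$, the equality $\beta_l(R')=\operatorname{rank} F_l^{\tl\<i,j\>}=\operatorname{rank} F_l=\beta_l(R)$ is immediate from $S(-\ba)^{\tl\<i,j\>}\cong S(-\tau(\ba))$ without tracking multidegrees---but your argument is fine as written.
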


Let $\lambda_{\<i,j\>}: \ZZ^n \to \ZZ^n$ be the order preserving map defined by
$(\lambda_{\<i,j\>}(\ba))_k =a_k$ for all $k \ne i$ and  
$$
(\lambda_{\<i,j\>}(\ba))_i:=
\begin{cases}
a_i  & \text{if $a_i  \geq j$,} \\
a_i+1 & \text{if $a_i < j$.}
\end{cases}
$$ 
Let $(-)^{\tr \<i, j\>}$ denote the functor $\lambda^*:\mod_{\ZZ^n} S \to \mod_{\ZZ^n} S$.

We define $\rho_{\<i,j\>}:\ZZ^n \to \ZZ^n$ by $(\rho_{\<i,j\>}(\ba))_k=a_k$ for 
all $k \ne i$ and 
$$
(\rho_{\<i,j\>}(\ba))_i:=
\begin{cases}
a_i & \text{if $a_i > j$,}\\
a_i - 1 & \text{if $a_i \leq j$,} 
\end{cases}
$$ 
Then we have $S(-\ba)^{\tr \<i, j\>} \cong 
S(-\rho_{\<i,j\>}(\ba))$ for all $\ba \in \ZZ^n$. 

\begin{rem}\label{toukasei}
It is easy to see that  $M^{\tl \<i,j\>}\cong (M^{\tr \<i, j-1 \>})(-\be_i)$. 
In this sense, $(-)^{\tr \<i,j\>}$ is  ``parallel" to $(-)^{\tl \<i,j\>}$.
Hence Proposition~\ref{triangles preserve} and the first assertion of Corollary~\ref{CM & Gor} 
also hold for $M^{\tr \<i,j\>}$. 
\end{rem}

Let $\Db(\mod_{\ZZ^n} S)$ denote the bounded derived category. 
Since the functor $(-)^{\tl  \<i,j\>}: \mod_{\ZZ^n} S \to \mod_{\ZZ^n} S$ is exact, 
it can be extended to $(-)^{\tl  \<i,j\>}: \Db(\mod_{\ZZ^n} S) \to 
\Db(\mod_{\ZZ^n} S)$ in the natural way. The same is true for  $(-)^{\tr  \<i,j\>}$. 
We denote the exact functor 
${\mathbf R}\!\Hom_S(-,S):\Db(\mod_{\ZZ^n} S) \to \Db(\mod_{\ZZ^n} S)^\op$ by $\DD$.

\begin{thm}\label{local duality}
We have natural isomorphisms 
$$\DD \circ (-)^{\tl  \<i,j\>} \cong 
(-)^{\tr  \<i, -j\>} \circ \DD \quad \text{and} \quad  
\DD \circ (-)^{\tr  \<i,j\>} \cong (-)^{\tl  \<i, -j\>} \circ \DD.$$
In particular, we have $$\Ext_S^l(M^{\tl  \<i,j\>}, S) \cong 
\Ext_S^l(M, S)^{\tr  \<i,-j\>} \quad \text{and} \quad 
\Ext_S^l(M^{\tr  \<i,j\>}, S) \cong 
\Ext_S^l(M, S)^{\tl  \<i,-j\>}$$ 
for $M \in \mod_{\ZZ^n} S$. 
\end{thm}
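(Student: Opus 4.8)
The plan is to carry out the whole comparison at the level of finite complexes of free modules, where all the functors involved act termwise, and then transport the result to the derived category. Since $S$ is regular, I would first fix a finite $\ZZ^n$-graded free resolution $F_\bullet$ of $M$, so that $\DD(M)$ is represented by the bounded complex of finitely generated free modules $\Hom_S(F_\bullet,S)$. Because $(-)^{\tl\<i,j\>}$ is exact and $S(-\ba)^{\tl\<i,j\>}\cong S(-\tau_{\<i,j\>}(\ba))$ (as already used in the proof of Proposition~\ref{triangles preserve}), the complex $(F_\bullet)^{\tl\<i,j\>}$ is a finite free resolution of $M^{\tl\<i,j\>}$, so $\DD(M^{\tl\<i,j\>})$ is represented by $\Hom_S\bigl((F_\bullet)^{\tl\<i,j\>},S\bigr)$; and since $(-)^{\tr\<i,-j\>}$ is exact, $\DD(M)^{\tr\<i,-j\>}$ is represented by $\bigl(\Hom_S(F_\bullet,S)\bigr)^{\tr\<i,-j\>}$, obtained by applying $(-)^{\tr\<i,-j\>}$ in each homological degree. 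It therefore suffices to produce an isomorphism of complexes of free modules
$$\Hom_S\bigl((F_\bullet)^{\tl\<i,j\>},S\bigr)\;\cong\;\bigl(\Hom_S(F_\bullet,S)\bigr)^{\tr\<i,-j\>}$$
that is natural in $F_\bullet$; naturality of the induced isomorphism of functors on $\mod_{\ZZ^n}S$, hence on $\Db(\mod_{\ZZ^n}S)$, then follows by lifting a morphism $M\to N$ to a morphism of free resolutions.

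Both sides are additive in the free summands of $F_\bullet$, so the construction splits into a check on summands and a check on differentials. On a single summand the elementary identity
$$\tau_{\<i,j\>}(\ba)\;=\;-\,\rho_{\<i,-j\>}(-\ba)\qquad(\ba\in\ZZ^n),$$
read off coordinate by coordinate from the definitions, gives $\Hom_S\bigl(S(-\ba)^{\tl\<i,j\>},S\bigr)=S(\tau_{\<i,j\>}(\ba))=S(-\rho_{\<i,-j\>}(-\ba))=\bigl(\Hom_S(S(-\ba),S)\bigr)^{\tr\<i,-j\>}$, so the two complexes carry the same free modules in each degree. For the differentials I would check that the matrix of monomials describing $d_\bullet$ is carried to the same matrix by the two procedures ``apply $(-)^{\tl\<i,j\>}=(\sigma_{\<i,j\>})^*$, then dualize'' and ``dualize, then apply $(-)^{\tr\<i,-j\>}=(\lambda_{\<i,-j\>})^*$'': in each case an entry $c\,x^{\ba-\bb}$ with $c\in\kk$ becomes $c\,x^{\tau_{\<i,j\>}(\ba)-\tau_{\<i,j\>}(\bb)}$ between the correspondingly shifted free modules, the scalar surviving because $q^*$ is a functor compatible with the $S$-action, and the exponent vectors agreeing precisely by the displayed identity. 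Unwinding the Brun--Fl\o ystad construction on morphisms to confirm this bookkeeping is the only real work; I expect it to be the main obstacle, though a routine one.

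This establishes the first natural isomorphism $\DD\circ(-)^{\tl\<i,j\>}\cong(-)^{\tr\<i,-j\>}\circ\DD$ of functors $\Db(\mod_{\ZZ^n}S)\to\Db(\mod_{\ZZ^n}S)^\op$. For the second isomorphism I would either repeat the argument with $\tl$ and $\tr$ interchanged, using $S(-\ba)^{\tr\<i,j\>}\cong S(-\rho_{\<i,j\>}(\ba))$ and the companion identity $\rho_{\<i,j\>}(\ba)=-\tau_{\<i,-j\>}(-\ba)$, or deduce it from the first by applying $\DD$ and invoking $\DD\circ\DD\cong\idmap$; Remark~\ref{toukasei} offers a third route. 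Finally, the ``in particular'' clause follows by taking $l$-th cohomology: $(-)^{\tr\<i,-j\>}$ and $(-)^{\tl\<i,-j\>}$ are exact, hence commute with $H^l$, so $\Ext^l_S(M^{\tl\<i,j\>},S)=H^l\bigl(\DD(M^{\tl\<i,j\>})\bigr)\cong H^l\bigl(\DD(M)^{\tr\<i,-j\>}\bigr)=\Ext^l_S(M,S)^{\tr\<i,-j\>}$, and symmetrically for the other.
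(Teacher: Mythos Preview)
Your proposal is correct and follows essentially the same route as the paper's own proof: reduce to complexes of graded free modules, identify the terms on each side via the key identity $-\tau_{\<i,j\>}(\ba)=\rho_{\<i,-j\>}(-\ba)$, and then check that the differentials match by comparing the monomial matrices. The paper is slightly terser (it works directly with a minimal free resolution of an object $M^\bullet\in\Db(\mod_{\ZZ^n}S)$ and leaves naturality implicit), whereas you are more explicit about naturality and about the alternative derivations of the second isomorphism, but there is no substantive difference in strategy.
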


\begin{proof}
For  $M^\bullet \in \Db(\mod_{\ZZ^n} S)$, we can take its minimal free resolution $F^\bullet$.   
Then $C_1^\bullet := \Hom_S((F^\bullet)^{\tl \<i,j\>},S)$ 
(resp. $C_2^\bullet := \Hom_S(F^\bullet,S)^{\tr \<i, -j\>}$) is a minimal free resolution 
of $\DD((M^\bullet)^{\tl \<i,j\>})$ 
(resp. $\DD(M^\bullet)^{\tr \<i,-j\>}$). 
If the $l^{\rm th}$ term $F^l$ of $F^\bullet$ is of the form 
$\bigoplus_{\ba \in \ZZ^n} S(-\ba)^{b_{l,\ba}}$,  
then both $C_1^l$ and $C_2^l$ are isomorphic to 
$\bigoplus_{\ba \in \ZZ^n} S(\tau_{\<i,j\>}(\ba))^{b_{-l,\ba}}$. 
(Note that $-\tau_{\<i,j\>}(\ba) = \rho_{\<i,-j\>}(-\ba)$ for all $\ba \in \ZZ^n$.)   
Considering the relation among 
the matrices representing the differential maps $C_1^l \to C_1^{l+1}$,  
$C_2^l \to C_2^{l+1}$ and $F^{-l-1} \to F^{-l}$, we can easily show that 
$C_1^\bullet$ and $C_2^\bullet$ are isomorphic. 
Hence we have  $\DD((M^\bullet)^{\tl  \<i,j\>}) 
\cong \DD(M^\bullet)^{\tr  \<i,-j\>}$. 
The remaining statements are clear now.    
\end{proof}

Let $M$ be a finitely generated $S$-module. 
We say $M$ is {\it sequentially Cohen-Macaulay} if $\Ext_S^{n-i}(M,S)$ 
is either a Cohen-Macaulay module of dimension $i$ or the 0 module for all $i$. 
The original definition is given by the existence of a certain filtration 
(see \cite[III, Definition~2.9]{St}), 
however it is equivalent to the above one by \cite[III, Theorem~2.11]{St}.

Recall that we say  a quotient ring $R:=S/I$ satisfies Serre's condition 
$(S_r)$ if $\opn{depth} R_{\fp} \geq \min \{ \, r, \, \dim R_{\fp} \, \}$ 
for all prime ideal $\fp$ of $R$. If $R$ satisfies $(S_2)$, then all 
associated primes of $I$ have the same codimension in $S$. Hence, for $r \geq 2$, 
$R$ satisfies $(S_r)$ if and only if 
$\dim_S \Ext^{n-i}_S(R,S) \leq i-r$ for all $i < \dim R$.  
Here the dimension of the 0 module is $-\infty$. 

\begin{cor}\label{seqCM for slide}
For $M \in \mod_{\ZZ^n} S$, it is sequentially Cohen-Macaulay, if and only if 
so is $M^{\tl  \<i,j\>}$, if and only if 
so is $M^{\tr  \<i,-j\>}$.   
For a monomial ideal $I$, $S/I$ satisfies $(S_r)$ if and only if so dose $S/I^{\tl  \<i,j\>}$. 
\end{cor}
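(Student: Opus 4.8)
The plan is to deduce both statements formally from Theorem~\ref{local duality}, which swaps the sliding functors with $\Ext_S^\bullet(-,S)$, together with the invariance of depth, dimension and associated primes from Proposition~\ref{triangles preserve} (extended to the $\tr$-slide in Remark~\ref{toukasei}). The one extra elementary remark needed is that each sliding functor detects the zero module: since the underlying order-preserving maps $\sigma_{\<i,j\>}$ and $\lambda_{\<i,j\>}$ are surjective, $N^{\tl\<i,j\>}=0$ (resp. $N^{\tr\<i,j\>}=0$) if and only if $N=0$. Combined with the first assertion of Corollary~\ref{CM & Gor} and its $\tr$-analogue (Remark~\ref{toukasei}), this yields: for every integer $l$ and every $N\in\mod_{\ZZ^n}S$, the module $N^{\tl\<i,j\>}$ (resp. $N^{\tr\<i,j\>}$) is zero or Cohen-Macaulay of dimension $l$ precisely when $N$ is.

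For sequential Cohen-Macaulayness, recall that $M^{\tl\<i,j\>}$ is sequentially Cohen-Macaulay iff for every $l$ the module $\Ext_S^{n-l}(M^{\tl\<i,j\>},S)$ is zero or Cohen-Macaulay of dimension $l$. By Theorem~\ref{local duality} this module equals $\Ext_S^{n-l}(M,S)^{\tr\<i,-j\>}$, which by the previous paragraph satisfies the required condition exactly when $\Ext_S^{n-l}(M,S)$ does; quantifying over $l$ shows $M^{\tl\<i,j\>}$ is sequentially Cohen-Macaulay iff $M$ is. The equivalence with $M^{\tr\<i,-j\>}$ follows the same way from the second isomorphism $\Ext_S^{n-l}(M^{\tr\<i,-j\>},S)\cong\Ext_S^{n-l}(M,S)^{\tl\<i,j\>}$ of Theorem~\ref{local duality}; alternatively, Remark~\ref{toukasei} gives $M^{\tr\<i,-j\>}\cong(M^{\tl\<i,-j+1\>})(\be_i)$, and sequential Cohen-Macaulayness is insensitive to a degree shift.

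For Serre's condition, first note we may assume $j>0$, so that $I^{\tl\<i,j\>}$ is a monomial ideal with $S/I^{\tl\<i,j\>}=(S/I)^{\tl\<i,j\>}$; for $j<0$ one has $(S/I)^{\tl\<i,j\>}\cong(S/I)(-\be_i)$, and $(S_r)$, being a statement about the local rings $(S/I)_\fp$, is unchanged by a grading shift. Put $R=S/I$ and $R'=S/I^{\tl\<i,j\>}$. When $r\le1$ the condition $(S_r)$ is vacuous ($r=0$) or equivalent to the absence of embedded associated primes ($r=1$); since $\Ass R'=\Ass R$ by Proposition~\ref{triangles preserve} and the minimal primes of a module are the minimal elements of its set of associated primes, $R$ satisfies $(S_1)$ iff $R'$ does. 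When $r\ge2$ I would use the criterion recalled before the corollary: $R$ satisfies $(S_r)$ iff $\dim_S\Ext_S^{n-l}(R,S)\le l-r$ for all $l<\dim R$. Now $\dim R'=\dim R$ by Proposition~\ref{triangles preserve}, and by Theorem~\ref{local duality} together with the dimension-invariance of the $\tr$-slide (Proposition~\ref{triangles preserve} and Remark~\ref{toukasei}, with $\dim_S 0=-\infty$) one has $\dim_S\Ext_S^{n-l}(R',S)=\dim_S\bigl(\Ext_S^{n-l}(R,S)^{\tr\<i,-j\>}\bigr)=\dim_S\Ext_S^{n-l}(R,S)$ for every $l$. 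Hence the defining inequalities for $R'$ coincide term by term with those for $R$, and $R$ satisfies $(S_r)$ iff $R'$ does.

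Once Theorem~\ref{local duality} and Proposition~\ref{triangles preserve} are granted the argument is purely formal, so I do not expect a real obstacle; the points needing a little care are (i) verifying that the sliding functors preserve the zero module and both Cohen-Macaulayness and the exact value of the dimension, so that the clause ``zero or Cohen-Macaulay of dimension $l$'' transfers verbatim; (ii) handling $r\le1$ separately for Serre's condition, where the $\Ext$-dimension criterion does not apply and one argues via associated primes; and (iii) keeping track of the conventions, especially the sign change $j\mapsto-j$ produced by $\DD$ and the restriction $j>0$ required for $I^{\tl\<i,j\>}$ to be an ideal.
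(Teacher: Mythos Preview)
Your argument is correct and follows essentially the same route as the paper's proof, which simply cites Proposition~\ref{triangles preserve}, Remark~\ref{toukasei} and Theorem~\ref{local duality} and invokes the $\Ext$-characterizations stated just before the corollary, while handling the $(S_1)$ case separately via associated primes. You have merely spelled out the details the paper leaves implicit (the zero-detection by sliding functors, the $j>0$ convention, and the case split $r\le1$ versus $r\ge2$); nothing in your approach deviates from the paper's.
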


\begin{proof}
By Proposition~\ref{triangles preserve}, Remark~\ref{toukasei} and 
Theorem~\ref{local duality},
the assertion follows from the remarks before this corollary.  
(Precisely speaking, the $(S_1)$ condition is not covered by this argument. 
However this case immediately follows from Proposition~\ref{triangles preserve}.)
\end{proof}

\section{Sliding functors and Stanley depth}
For a subset $Z \subset \{ x_1, \ldots, x_n \}$, $\kk[Z]$ denotes 
the subalgebra of $S$ generated by all $x_i \in Z$. 
Clearly, $\kk[Z]$ is a polynomial ring with $\dim \kk[Z] = \# Z$. 
Let $M \in \mod_{\ZZ^n} S$.  We say the $\kk[Z]$-submodule $m \, \kk[Z]$ of $M$ 
generated by a homogeneous element $m \in M_\ba$ is a {\it Stanley space}, if it is 
$\kk[Z]$-free.  
A {\it Stanley decomposition} $\cD$ of $M$ is a presentation of 
$M$ as a finite direct sum of Stanley spaces. That is, 
$$\cD:\bigoplus_{l=1}^s m_l \, \kk[Z_l] = M$$ 
as $\ZZ^n$-graded $\kk$-vector spaces, where each $m_l \, \kk[Z_l]$ 
is a Stanley space. 

Let $\sd(M)$ be the set of Stanley decompositions of $M$. 
For all $0 \ne M \in \mod_{\ZZ^n} S$, we have $\sd (M) \ne \emptyset$. 
For $\cD=\bigoplus_{l=1}^s m_l \, \kk[Z_l] \in \sd (M)$, we set 
$$\sdepth (\cD) := \min\set{\# Z_l}{l = 1,\dots ,s},$$
and call it the  {\em Stanley depth} of $\cD$. 
The Stanley depth of $M$ is defined by 
$$
\sdepth (M) := \max\set{\sdepth \cD}{\cD \in \sd (M)}.
$$

This invariant behaves somewhat strangely. For example, contrary to the usual depth, 
no relation between the Stanley depth of a monomial ideal $I$ and that of the 
quotient $S/I$ is known.

The following conjecture, which is widely open even for monomial ideals $I$ and the quotients $S/I$, 
is a leading problem of this subject. See \cite{A1,A2,HVZ} for further information (especially, known results mentioned below). 

\begin{conj}
[Stanley]\label{Stanley conj}
For any $M \in \mod_{\ZZ^n} S$, we have
$$
\sdepth M \ge \depth M.
$$
\end{conj}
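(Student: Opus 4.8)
The plan is not to give a complete proof --- Conjecture~\ref{Stanley conj} is open --- but to describe how the functors introduced here are meant to cut the problem down to a much smaller class of modules. The starting point is that the inequality $\sdepth M \ge \depth M$ is stable under the sliding functors: Proposition~\ref{triangles preserve} already gives $\depth M^{\tl\<i,j\>} = \depth M$, and once the forthcoming Theorem~\ref{sdepth} supplies $\sdepth M^{\tl\<i,j\>} = \sdepth M$, the conjecture holds for $M$ if and only if it holds for $M^{\tl\<i,j\>}$; the same applies to $(-)^{\tr\<i,j\>}$ by Remark~\ref{toukasei} together with Theorem~\ref{sdepth}. Iterating $(-)^{\tl\<i,j\>}$ and $(-)^{\tr\<i,j\>}$ over all $i$ and $j$, one can normalise the multidegrees of the minimal generators of a monomial ideal: every monomial ideal $I$ becomes, after a finite chain of sliding functors, one of type $(*)$ from the introduction --- its generator degrees in each coordinate forming a consecutive block --- and, dually, one can instead spread those degrees out as much as desired. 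Hence it is enough to establish the conjecture for ideals of type $(*)$, or for the ``sparse'' analogue.

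The second reduction is via polarization. Any monomial ideal lies in $\mod_\ba S$ for a suitable $\ba \succeq \one$, so one may apply $\pol : \mod_\ba S \to \Sq\wS$. The point to verify would be that $\pol$ raises both invariants by the same amount, namely the number $\sum_{i=1}^{n}(a_i-1)$ of new variables of $\wS$: the depth statement $\depth \pol(M) = \depth M + \sum_{i=1}^{n}(a_i-1)$ is essentially classical, following from preservation of the projective dimension (cf.\ \cite{BH}), while the matching identity $\sdepth \pol(M) = \sdepth M + \sum_{i=1}^{n}(a_i-1)$ is the part requiring genuine work. The natural tool for the latter is the combinatorial description of Stanley spaces inside a squarefree module in terms of intervals of the Boolean lattice $2^{[n]}$, combined with the explicit degree-by-degree formula for $\pol(x^\bb)$ recorded in the introduction. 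Granting this, the conjecture for $M$ is equivalent to the conjecture for the squarefree module $\pol(M)$, so one is reduced to Stanley-Reisner rings and squarefree monomial ideals, and --- combining the two reductions --- to those of a prescribed ``consecutive'' or ``sparse'' shape.

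The hard part is everything that is left. Even for squarefree monomial ideals carrying the extra structure produced above, no uniform construction of a Stanley decomposition of dimension $\ge \depth$ is known; the obstruction is purely combinatorial (Stanley depth of intervals in the Boolean lattice, interval partitions of posets, Hilbert-function constraints), and this is exactly where the subject is blocked. It should also be said that the reductions above can only ever yield equivalences, not the conjecture itself: for quotient rings $S/I$ the statement has since been shown to be false (Duval-Goeckner-Klivans-Martin), so the realistic payoff of the sliding and polarization functors is the structural one --- whatever the eventual status of $\sdepth M \ge \depth M$, it is insensitive to these operations, and so need only be examined on the normalised families described here.
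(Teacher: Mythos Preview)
The statement is a \emph{conjecture}, and the paper offers no proof of it; it is introduced as ``widely open'' and serves only as motivation for Theorem~\ref{sdepth} and Corollary~\ref{consecutive}. You recognise this at the outset, so there is no error of intent.

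Your first paragraph tracks the paper faithfully: the invariance of $\depth$ under sliding (Proposition~\ref{triangles preserve}), the invariance of $\sdepth$ (Theorem~\ref{sdepth}), and the reduction to ideals of type $(*)$ (Corollary~\ref{consecutive}) are exactly what the paper proves, and you cite them correctly.

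Your second paragraph, however, goes beyond the paper. The depth shift under $\pol$ is indeed Lemma~\ref{polarization basic}, but the paper never claims, let alone proves, that $\sdepth \pol(M) = \sdepth M + |\ba| - n$; Stanley depth does not appear in \S4 at all. So the polarization reduction you sketch is your own addition, not part of the paper's programme for this conjecture. (That identity is in fact known for monomial ideals and their quotients, but it comes from other sources and is not treated here.) Likewise, the Duval--Goeckner--Klivans--Martin counterexample postdates the paper and is outside its scope.

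In short: there is nothing to compare against, since the paper gives no proof; your summary of the sliding-functor reduction is accurate, while the polarization reduction and the reference to the counterexample are extraneous to what the paper actually does.
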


The following is the main result of this section. 

\begin{thm}\label{sdepth}
For $M \in \mod_{\ZZ^n} S$, we have 
$$\sdepth M = \sdepth M^{\tl \<i,j\>}.$$
Hence Stanley's conjecture holds for $M$ if and only if 
it holds for $M^{\tl \<i,j\>}$. 
\end{thm}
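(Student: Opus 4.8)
The plan is to prove $\sdepth M = \sdepth M^{\tl\<i,j\>}$ by constructing mutually inverse maps between the sets of Stanley decompositions $\sd(M)$ and $\sd(M^{\tl\<i,j\>})$ that preserve (or at least do not decrease) the Stanley depth. The key point is to understand precisely how the functor $(-)^{\tl\<i,j\>}$, which is built from the order-preserving map $\sigma=\sigma_{\<i,j\>}$, acts on a single Stanley space $m\,\kk[Z]$, and in particular what happens to the variable $x_i$. I would split into two cases according to whether $x_i\in Z$ or $x_i\notin Z$, and track the degree $\ba$ of the generator $m$ under the substitutions $\sigma$, $\tau=\tau_{\<i,j\>}$.

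First I would record the elementary bookkeeping. Since $S(-\ba)^{\tl\<i,j\>}\cong S(-\tau(\ba))$ and the functor is exact, for any free $\kk[Z]$-module the image under $(-)^{\tl\<i,j\>}$ is again free of the same rank; more to the point, for a Stanley space $m\,\kk[Z]\subseteq M$ with $m\in M_\ba$ I claim its image is a Stanley space $\dot m\,\kk[Z']\subseteq M^{\tl\<i,j\>}$ where $\#Z'=\#Z$, with $Z'=Z$ when $x_i\notin Z$ (and $\dot m$ has degree $\tau(\ba)$), and when $x_i\in Z$ we still get $Z'=Z$ because the copy of $\kk[x_i]$ inside the Stanley space is ``stretched'' by the functor but remains a free rank-one $\kk[x_i]$-module on a shifted generator — this is exactly the phenomenon noted in the excerpt that when $a_i=j-1$ the multiplication by $x_i$ on $M^{\tl\<i,j\>}$ is bijective from degree $\ba$ to degree $\ba+\be_i$. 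In every case $\#Z'=\#Z$, so $\sdepth$ of the decomposition is unchanged. Applying this to every summand of a decomposition $\cD\in\sd(M)$ gives a decomposition of $M^{\tl\<i,j\>}$ of the same Stanley depth, hence $\sdepth M^{\tl\<i,j\>}\ge\sdepth M$.

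For the reverse inequality I would use the ``parallel'' functor $(-)^{\tr\<i,j\>}$ via Remark~\ref{toukasei}, which gives $M^{\tl\<i,j\>}{}^{\tr\<i,j-1\>}(-\be_i)\cong$ (something recovering $M$ up to a shift); alternatively, and more cleanly, I would directly build a Stanley decomposition of $M$ from one of $M^{\tl\<i,j\>}$ by reversing the construction above, using $\sigma\circ\tau=\opn{Id}$. The one subtlety is that a Stanley space of $M^{\tl\<i,j\>}$ generated in a degree $\bb$ with $b_i=j-1$ or $b_i=j$ must be ``contracted'' back correctly: the slab of degrees with $i$-th coordinate $j-1$ and $j$ in $M^{\tl\<i,j\>}$ both come from the single slab $b_i=j-1$ in $M$, and one must check that a $\kk[Z]$-free summand downstairs pulls back to a $\kk[Z]$-free summand upstairs of the same $\#Z$. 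Since a degree-preserving $\kk$-linear iso respecting the $\kk[Z]$-action is enough, and the functor is faithful on the relevant graded pieces, this goes through.

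The main obstacle I anticipate is the case analysis around the degenerate degree $a_i=j-1$: one has to be careful that when a Stanley space with $x_i\in Z$ is generated at degree $\ba$ with $a_i=j-1$, versus at degree with $a_i=j$ (its image), the ``two rows collapsing to one'' does not secretly merge two Stanley spaces or split one, and that freeness over $\kk[x_i]$ is genuinely preserved in both directions. I would handle this by writing each $\kk[Z]$ with $x_i\in Z$ as $\kk[Z\setminus\{x_i\}]\otimes_\kk\kk[x_i]$ and checking the $x_i$-direction separately — there the claim reduces to the statement, already in the excerpt, that $(-)^{\tl\<i,j\>}$ turns the $\kk[x_i]$-module $\kk[x_i]$ (suitably graded) into a free $\kk[x_i]$-module of rank one on a shifted generator, and vice versa. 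Everything else is formal: apply the (exact, degree-tracking) functor summand by summand, invoke $\#Z'=\#Z$, and conclude equality of the two maxima.
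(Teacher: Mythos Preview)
Your approach is the same as the paper's --- transport Stanley decompositions between $M$ and $M^{\tl\<i,j\>}$ and check that $\min\#Z_l$ is preserved --- but there is a concrete gap in the direction $\sdepth M^{\tl\<i,j\>}\ge\sdepth M$. Your claim that a Stanley space $m\,\kk[Z]\subseteq M$ with $m\in M_\ba$ transports to a \emph{single} Stanley space $\dot m\,\kk[Z]\subseteq M^{\tl\<i,j\>}$ with $\dot m$ of degree $\tau(\ba)$ fails to produce a decomposition in the case $x_i\notin Z$ and $a_i=j-1$. In that situation $\tau(\ba)=\ba$, so $\dot m\,\kk[Z]$ lives entirely in the slab $b_i=j-1$ of $M^{\tl\<i,j\>}$; but the slab $b_i=j$ is a second copy of the same data and is \emph{not} covered by any of your transported spaces. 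The paper fixes this by adjoining, for exactly those $l$, the extra space $(x_i\dot m_l)\,\kk[Z_l]$ in degree $\ba_l+\be_i$; since it has the same $Z_l$, the Stanley depth of the decomposition is unaffected. Your last paragraph anticipates trouble at $a_i=j-1$ but focuses on the harmless case $x_i\in Z$; it is precisely $x_i\notin Z$ that requires the duplication.

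For the reverse inequality you are too vague to count as a proof. The phrase ``a $\kk[Z]$-free summand downstairs pulls back to a $\kk[Z]$-free summand upstairs of the same $\#Z$'' is not what happens: the paper's construction \emph{discards} those Stanley spaces of $M^{\tl\<i,j\>}$ whose generator sits in the slab $(\ba_l)_i=j-1$ with $x_i\notin Z_l$ (they are redundant once the two slabs collapse to one) and keeps the rest, transported via $\sigma$. Identifying which pieces to drop --- and verifying that what remains is still a direct sum covering all of $M$ --- is the content of this direction, and your sketch does not supply it. The alternative you float, going through $(-)^{\tr\<i,j-1\>}$ and a shift, would work but requires first proving the analogous transport statement for $(-)^{\tr\<i,j\>}$, which has the same case analysis.
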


\begin{proof}
Let $M^{\tl \<i,j\>}= \bigoplus_{l=1}^s m_l \, \kk[Z_l]$ be a Stanley decomposition with 
$m_l \in (M^{\tl \<i,j\>})_{\ba_l}$. 
We may assume that; 
\begin{itemize}
\item If $1 \leq l \leq q$, then $(\ba_l)_i=j-1$ and $x_i \in Z_l$;
\item If $q+1 \leq l \leq r$, then $(\ba_l)_i \ne j-1$;
\item If $r+1 \leq l \leq s$, then $(\ba_l)_i = j-1$ and $x_i \not \in Z_l$. 
\end{itemize}
Recall that if $a_i= j-1$ then 
$(M^{\tl\<i,j\>})_\ba = (M^{\tl\<i,j\>})_{\ba+\be_i}=M_\ba$ and  
the multiplication map $(M^{\tl \<i,j\>})_\ba \ni 
y \longmapsto x_i y \in (M^{\tl \<i,j\>})_{\ba+\be_i}$ is bijective. 
For $\bb \in \NN^n$ with $b_i \ne j-1$, we have 
$$(M^{\tl \<i,j\>})_\bb \subset \bigoplus_{l=1}^q (x_i m_l) \kk[Z_l] \oplus 
\bigoplus_{l=q+1}^r  m_l \kk[Z_l].$$ 
For each $1 \leq l \leq r$, let $m_l' \in M_{\sigma(\ba_l)}$ be the element corresponding to $m_l$. 
Then we can check that $M= \bigoplus_{l=1}^{r} m_l' \, \kk[Z_l]$ is a Stanley decomposition. 
Hence we have $\sdepth M \geq \sdepth M^{\tl \<i,j\>}.$

Conversely, let $M= \bigoplus_{l=1}^s m_l \, \kk[Z_l]$ be a Stanley decomposition with 
$m_l \in M_{\ba_l}$. We may assume that 
\begin{itemize}
\item If $ 1 \leq l \leq t$, then either  $(\ba_l)_i \ne  j-1$ or $x_i \in Z_l$. 
\item If $t+1 \leq l \leq s$, then   $(\ba_l)_i =  j-1$ and $x_i \not \in Z_l$. 
\end{itemize}
For $1 \leq l \leq s$, let $\dot{m}_l \in (M^{\tl \<i,j\>})_{\tau(\ba_l)}$ 
be the element corresponding to $m_l$. 
Further more, for $1 \leq l \leq t$,  
set $\dot{m}_{s+l} := x_i \, \dot{m}_l \in  (M^{\tl \<i,j\>})_{\tau(\ba_l)+\be_i}$  
(note that $\tau(\ba_l)=\ba_l$ in this case) and $Z_{l+s} :=Z_l$. 
Now we have homogeneous elements $\dot{m}_1, \ldots, \dot{m}_{s+t}$ 
of $M^{\tl \<i,j\>}$, and it is easy to check that 
$M^{\tl \<i,j\>}= \bigoplus_{l=1}^{s+t} \dot{m}_l \, \kk[Z_l]$. 
Hence we have $\sdepth M \leq \sdepth M^{\tl \<i,j\>}.$
\end{proof}

Unfortunately (?), many classes of monomial ideals for which Stanley's conjecture  has been proved 
are closed under the operation $(-)^{\tl \<i,j\>}$. 
For example, a monomial ideal $I$ is Borel type 
(resp. generic, cogeneric) if and only if so is $I^{\tl \<i,j\>}$. 
The following class is an exception.  

Let $I$ be a monomial ideal minimally generated by monomials $m_1, \ldots, m_r$.  
We say $I$ has {\it linear quotients} if after suitable change  of the order of $m_i$'s,   
the colon ideal $(m_1, \ldots, m_{i-1}):m_i$ is a monomial prime ideal for all $2 \leq i \leq r$. 
It is known that Stanley's conjecture holds for ideals of this type (\cite{HVZ}).   
Note that the ideal  $I:=(x^r, x^{r-1}y, \cdots, xy^{r-1}, y^r) \subset \kk[x,y]$ has linear quotients.  
Let $c_1, \ldots, c_r$ and  $d_1, \ldots, d_r$ be strictly 
decreasing sequences consisting of positive integers.    
Applying the functors of the form $(-)^{\tl \<i,j\>}$ to $I$ repeatedly, we get 
$I'=(x^{c_1}, x^{c_2}y^{d_r}, \ldots, x^{c_r}y^{d_2}, y^{d_1})$, 
which does not have linear quotients.  
Of course, the conjecture is trivial for $I'$, but it is easy to construct 
non-trivial examples. 

\medskip

Let $I$ be a monomial ideal minimally generated by $x^{\ba_1}, \ldots, x^{\ba_r}$. 
Consider the following condition:   
\begin{itemize}
\item[$(*)$] For each $i \in [n]$, there is a positive integer $b_i$ such that
$\{\, (\ba_l)_i \mid 1 \le l \le r \, \}$ is  $\{ \, 1, 2, \ldots, b_i \, \}$  
or $\{ \, 0,1, \ldots, b_i \, \}$.   
\end{itemize}

\begin{cor}\label{consecutive}
If Stanley's conjecture holds for all monomial ideals satisfying the condition $(*)$, 
then the conjecture holds for arbitrary monomial ideals.   
The same is true for the quotient rings by monomial ideals.  
\end{cor}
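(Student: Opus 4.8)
The plan is to reduce an arbitrary monomial ideal to one satisfying $(*)$ by applying finitely many sliding functors $(-)^{\tl\<i,j\>}$, and then invoke Theorem~\ref{sdepth}. First I would observe that Theorem~\ref{sdepth} applies one sliding functor at a time and preserves the truth of Stanley's conjecture for both $M$ and (when $I$ is a monomial ideal with $j>0$) for $S/I$, using the remarks in \S2 that $I^{\tl\<i,j\>}$ is again a monomial ideal with $(S/I)^{\tl\<i,j\>}=S/I^{\tl\<i,j\>}$. So it suffices to show: for any monomial ideal $I$ minimally generated by $x^{\ba_1},\dots,x^{\ba_r}$, there is a finite composition of functors $(-)^{\tl\<i,j\>}$ (with positive $j$'s) carrying $I$ to an ideal satisfying $(*)$.

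The key step is to understand the effect of $(-)^{\tl\<i,j\>}$ on the set of exponents in the $i$-th coordinate. Recall $I^{\tl\<i,j\>}=(x^{\tau(\ba_1)},\dots,x^{\tau(\ba_r)})$ where $\tau=\tau_{\<i,j\>}$ fixes all coordinates $\ne i$ and sends $a_i\mapsto a_i$ if $a_i<j$ and $a_i\mapsto a_i+1$ if $a_i\ge j$. Thus $\tau$ acts on the $i$-th coordinate by ``opening a gap'' at value $j$: it is the identity below $j$ and a shift by $+1$ from $j$ onward. Fix $i$ and let $A_i:=\{(\ba_l)_i\mid 1\le l\le r\}$, a finite set of nonnegative integers, say $A_i=\{v_1<v_2<\dots<v_m\}$. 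I claim that by choosing suitable $j$'s we can make $A_i$ into a set of consecutive integers. Indeed, whenever there is a gap, i.e.\ $v_{p+1}>v_p+1$ for some $p$, apply $(-)^{\tl\<i,\,v_p+1\>}$: this fixes all exponents $\le v_p$ and shifts all exponents $\ge v_p+1$ up by one; since no exponent lies in $[v_p+1,v_{p+1}-1]$, the set $A_i$ is transformed by leaving $v_1,\dots,v_p$ fixed and replacing $v_{p+1},\dots,v_m$ by $v_{p+1}+1,\dots,v_m+1$ — so the $p$-th gap $v_{p+1}-v_p$ strictly increases while all earlier gaps are unchanged. This moves in the wrong direction, so instead I would use the inverse maneuver: to \emph{close} a gap at the bottom we note that $\sigma_{\<i,j\>}$ is the inverse of $\tau$ on the relevant range, but $\sigma$ corresponds to $j<0$ which we excluded. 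The correct approach is to first shift \emph{everything} above a gap: apply $(-)^{\tl\<i,j\>}$ repeatedly to push the smallest exponent down — but $\tau$ never decreases exponents. So the realistic plan is the opposite: rather than closing gaps in $A_i$, enlarge the support by inserting the missing values via the companion functor, i.e.\ show that after finitely many slides one may assume the exponents appearing in coordinate $i$ form an \emph{interval} $\{c,c+1,\dots,b_i\}$, and then a further translation (again realized by slides, or by the observation $I^{\tl\<i,j\>}$ for $j$ large enough is trivial on coordinate $i$) brings $c$ down to $0$ or $1$. Concretely: working one coordinate $i$ at a time (the slides for different $i$ commute in their effect on $A_i$), repeatedly apply $(-)^{\tl\<i,j\>}$ at values $j$ just below each existing exponent to separate coincident exponents is unnecessary; instead, to fill in a missing intermediate value, apply $(-)^{\tl\<i,j\>}$ at every integer $j$ strictly between two consecutive exponents — wait, that still only opens gaps. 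The honest statement is that sliding \emph{can only spread exponents apart}, never merge them, so $(*)$ cannot be the image; rather one reduces \emph{to} $(*)$ by noting the converse direction.

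Here is the correct reduction, which I would carry out: $(*)$ is a normal form reached by \emph{un-doing} gaps, i.e.\ by recognizing that every monomial ideal $I$ is of the form $I_0^{\tl\<i_1,j_1\>\cdots}$ for some $I_0$ satisfying $(*)$. Fix $i$ and $A_i=\{v_1<\dots<v_m\}$. If $v_1\ge 2$ it contributes no obstruction to writing $v_1$ smaller; but since $\tau$ only increases, $I$ must itself be a slide of an ideal whose $i$-th exponent set starts at $0$ or $1$: indeed if $v_1\ge 2$, let $I'$ be the ideal with exponent vectors $\ba_l$ replaced by $\ba_l-\be_i$ in coordinate $i$ — this is well-defined as a monomial ideal and $I=(I')^{\tl\<i,1\>}$ precisely when $\tau_{\<i,1\>}$ applied to $\{v_1-1,\dots,v_m-1\}$ returns $\{v_1,\dots,v_m\}$, which holds since $\tau_{\<i,1\>}$ shifts everything $\ge 1$ up by one and $v_1-1\ge1$. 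Iterating lowers $v_1$ to $1$. Next, if there is an internal gap, say $v_{p+1}=v_p+g+1$ with $g\ge 1$, let $I'$ be obtained by replacing, in every generator, the $i$-th exponent $a_i$ by $a_i-g$ whenever $a_i\ge v_{p+1}$ (and leaving it if $a_i\le v_p$); then $I=(I')^{\tl\<i,v_p+1\>}$ applied $g$ times, because each application of $\tau_{\<i,v_p+1\>}$ shifts the upper block up by one and after $g$ applications the gap of size $g$ is reopened. This is well-defined because the two blocks $\{a\in A_i: a\le v_p\}$ and $\{a-g: a\in A_i, a\ge v_{p+1}\}$ are disjoint and their union has one fewer gap. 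Repeating over all gaps and all $i$ (in any order — slides in coordinate $i$ do not disturb the exponent structure in coordinate $k\ne i$) produces $I_0$ satisfying $(*)$ with $I=I_0^{\tl\<\cdots\>}$. By Theorem~\ref{sdepth} applied once per slide, Stanley's conjecture for $I_0$ (resp.\ $S/I_0$) implies it for $I$ (resp.\ $S/I$). The main obstacle is precisely this bookkeeping: checking that each ``un-slide'' step $I'\rightsquigarrow I$ genuinely equals a power of $\tau_{\<i,j\>}$ and that $I'$ remains minimally generated by the modified exponent vectors (minimality is preserved since $\tau$ is injective on each coordinate's exponent range, hence the divisibility poset among generators is unchanged), and that the process terminates — it does, because each step strictly decreases the finite quantity $\sum_i\big(\max A_i - \#A_i\big) + \sum_i \max(\min A_i - 1,\,0)$.
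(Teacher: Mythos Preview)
Your approach is the same as the paper's: show that an arbitrary monomial ideal $J$ arises as $I_0^{\tl\<i_1,j_1\>}\cdots{}^{\tl\<i_s,j_s\>}$ for some $I_0$ satisfying $(*)$, then invoke Theorem~\ref{sdepth}. After some back-and-forth, your final construction of $I_0$ (closing gaps in each $A_i$ from the top down, and lowering $\min A_i$ to $0$ or $1$) is correct and in fact more explicit than the paper, which simply asserts that such an $I_0$ exists.

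There is one genuine omission. If some variable $x_i$ divides none of the minimal generators, then $A_i=\{0\}$, and no amount of sliding (with positive $j$) will produce an $i$-th exponent set of the form $\{1,\dots,b_i\}$ or $\{0,1,\dots,b_i\}$ with $b_i\ge 1$, since $\tau_{\<i,j\>}$ fixes the exponent $0$ for all $j\ge 1$. Your termination quantity does not detect this, and your procedure simply halts at $A_i=\{0\}$, which does not satisfy $(*)$. The paper handles this first: if $x_i$ divides none of the generators, pass to $J\cap\kk[x_1,\dots,\widehat{x_i},\dots,x_n]$ in the smaller polynomial ring, where both depth and Stanley depth drop by exactly one, so the conjecture is equivalent. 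Once every variable divides at least one generator, each $A_i$ contains a positive integer and your gap-closing argument goes through.
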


\begin{proof}
Let $J$ be a monomial ideal minimally generated by $x^{\bb_1}, \ldots, x^{\bb_r}$. 
If the variable $x_1$ divides none of $x^{\bb_1}, \ldots, x^{\bb_r}$, 
then we can reduce the problem to the ideal $J \cap \kk[x_2, \ldots, x_n]$ of $\kk[x_2, \ldots, x_n]$. 
So we may assume that each $x_i$ divides at least one of $x^{\bb_1}, \ldots, x^{\bb_r}$. 
Then there is a monomial ideal $I$ satisfying $(*)$ from which 
$J$ can be produced by the iterated applications of the functor $(-)^{\tl \<i,j\>}$.  
\end{proof}

\section{Polarization functor}
In the rest of the paper, we fix $\ba \in \NN^n$ such that $a_i \geq 1$ for all $i$. 

\begin{dfn}[{Miller \cite{M}}]
Let $\ba \in \ZZ^n$. 
We say $M \in \mod_{\NN^n} S$ is {\it positively $\ba$-determined}, if the multiplication map 
$M_\bb \ni y \longmapsto x_i y \in M_{\bb+\be_i}$ is bijective for all 
$\bb \in \NN^n$ and $i \in [n]$ with $b_i \geq a_i$. Let $\mod_{\ba} S$ denote 
the full subcategory of $\mod_{\NN^n} S$ consisting of positively $\ba$-determined modules. 

Set  $\one :=(1, \ldots, 1) \in \NN^n$. A positively $\one$-determined module 
is called a {\it squarefree module}, and we denote $\mod_\one S$ by $\Sq S$.  
\end{dfn}

The category $\mod_\ba S$ is an abelian category with 
enough projectives. An indecomposable projective 
is $S(-\bb)$ for some $\bb \in \NN^n$ with $\bb \preceq \ba$. 
If $M \in \mod_\ba S$, then $M^{\tl \<i,j\>} \in \mod_{\ba+\be_i} S$. 

For $\ba \in \NN^n$, set $|\ba|:=\sum_{i=1}^n a_i$ and let $\wS$ be the polynomial ring
$$\kk[\, x_{i,j} \mid 1 \leq i \leq n, 1 \leq j \leq a_i \,]$$
of $|\ba|$ variables. 
Extending the classical technique\ of the polarization of monomial ideals, 
we will define the functor $\pol: \mod_{\ba} S \to \Sq \wS$. 
This idea has already appeared in \cite[Theorem~2.1]{BH} and 
\cite[\S4]{Sb} in slightly different setting. 
See Remark~\ref{Sbarra} below for precise information.  

Set $[\ba] := \{ \, (i,\, j) \mid 1 \leq i \leq n, 1 \leq j \leq a_i \, \}$. 
Then $\NN^{[\ba]}$ is the set of vectors  
$\bb=( \, b_{i,j} \mid  1 \leq i \leq n, 1 \leq j \leq a_i \, )$
with $b_{i,j} \in \NN$ for all $i,j$.   
We can regard $\wS$ as an $\NN^{[\ba]}$-graded ring in the natural way.  
Define the order preserving map $\eta: \NN^{[\ba]} \to \NN^n$  by 
$$
\eta(\bb)_i = \begin{cases}
a_i & \text{if $b_{i,j}>0$ for all $j$,}\\
\min \{ \, j-1 \mid b_{i,j} =0 \, \} & \text{otherwise.} 
\end{cases}
$$
For $M \in \mod_{\ba} S$, we can construct a new module 
$\eta^*(M) \in \mod_{\NN^{[\ba]}} \wS$ so that $\eta^* (M)_\bb \cong M_{\eta(\bb)}$ 
and the multiplication map $\eta^*(M)_{\bb} \ni y \longmapsto t^{\bc} y \in \eta^*(M)_{\bb+\bc}$ 
is given by $M_{\eta(\bb)} \ni y \longmapsto x^{\eta(\bb+\bc)-\eta(\bb)} y \in M_{\eta(\bb+\bc)}$ 
for all $\bb,\bc \in \NN^{[\ba]}$. 
It is easy to show that $\eta^*(M) \in \Sq \wS$.  Hence $\eta^*$ gives a 
functor $\mod_{\ba} S \to \Sq \wS$, which is exact by 
an argument similar to  \cite[Lemma~2.4]{BF}. 
 
For $\bb \in \NN^n$ with $\bb \preceq \ba$, we define $\widetilde{\bb} \in \NN^{[\ba]}$ by 
$$\widetilde{b}_{i,j}=\begin{cases}
1 & \text{if $j \leq b_i$,}\\
0 & \text{otherwise.}
\end{cases}$$
Then it is easy to check that $\eta^*(S(-\bb)) \cong \wS(-\widetilde{\bb})$.  
For a positively $\ba$-determined monomial ideal $I=(x^{\bb_1}, \ldots, x^{\bb_r}) \subset S$, 
$\eta^*(I)$ coincides with the polarization 
$\pol(I)=(x^{\widetilde{\bb}_1}, \ldots, x^{\widetilde{\bb}_r}) \subset \wS$ of $I$. 
So we denote the functor $\eta^*$ by $\pol$. 

\begin{lem}\label{polarization basic}
For $M \in \mod_\ba S$, we have 
$$\beta_{i,j}^{\wS}(\pol(M)) = \beta_{i,j}^S(M),$$
$$\opn{depth}_{\wS} \, (\pol(M)) = \depth M+ |\ba|-n,$$ 
$$\dim_{\wS} \, (\pol(M) )= \dim_S M + |\ba|-n,$$       
and
$$\deg_{\wS} (\pol(M))=\deg_S M.$$ 
\end{lem}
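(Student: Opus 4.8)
The plan is to deduce everything from the single structural fact established just above, namely that $\pol = \eta^*$ where $\eta : \NN^{[\ba]} \to \NN^n$ is order preserving, together with the two already-recorded consequences: $\pol$ is exact and $\pol(S(-\bb)) \cong \wS(-\widetilde{\bb})$ for $\bb \preceq \ba$. First I would handle the Betti numbers: take a $\ZZ^n$-graded minimal free resolution $F_\bullet \to M$ in $\mod_\ba S$ (each $F_k$ a sum of $S(-\bb)$ with $\bb \preceq \ba$, since $M$ is positively $\ba$-determined); applying the exact functor $\pol$ gives a complex $\pol(F_\bullet) \to \pol(M)$ which is still a resolution, with $\pol(F_k) = \bigoplus \wS(-\widetilde{\bb})$. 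Minimality is preserved because a nonzero scalar never appears in the differential: the entries of the differential of $F_\bullet$ lie in $\fm_S$, and $\pol$ sends $x^{\bc}$ (with $\bc \ne \zero$, $\bc$ supported in degrees $\le \ba$) to a nonconstant monomial in $\wS$. Hence $\pol(F_\bullet)$ is a $\NN^{[\ba]}$-graded minimal free resolution of $\pol(M)$, which gives $\beta_{i,j}^{\wS}(\pol(M)) = \beta_{i,j}^S(M)$ after collapsing the $\NN^{[\ba]}$-grading to the standard $\ZZ$-grading (note $\deg_{\ZZ} \widetilde{\bb} = |\bb|$, so the internal degrees match).

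The depth formula then follows from the Auslander–Buchsbaum formula applied on both sides: $\depth_S M = n - \mathrm{pd}_S M$ and $\depth_{\wS} \pol(M) = |\ba| - \mathrm{pd}_{\wS} \pol(M)$, and the projective dimensions agree by the Betti number identity just proved; subtracting gives $\depth_{\wS} \pol(M) = \depth_S M + |\ba| - n$. For the dimension statement I would compute the multigraded Hilbert series. From $\eta^*(M)_\bb \cong M_{\eta(\bb)}$ one sees directly that the $\ZZ$-graded Hilbert series of $\pol(M)$ is obtained from that of $M$ by the substitution reflecting that each variable $x_i$ of $S$ is ``spread'' over $a_i$ variables of $\wS$; concretely, a Stanley-type decomposition argument (or the free-resolution computation of the $K$-polynomial, using $\beta_{i,\ba}$) shows $H_{\pol(M)}(t) = H_M(t) \cdot \prod_{i=1}^n (1-t)^{-(a_i-1)}$, i.e. the $K$-polynomial is unchanged while the denominator acquires $|\ba|-n$ extra factors of $(1-t)$. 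Reading off the order of the pole at $t=1$ gives $\dim_{\wS} \pol(M) = \dim_S M + |\ba| - n$, and reading off the value of the numerator at $t=1$ (which is exactly the multiplicity/degree, since the $K$-polynomial is literally the same polynomial) gives $\deg_{\wS} \pol(M) = \deg_S M$. One should check the $K$-polynomial really is fixed: it equals $\sum_i (-1)^i \sum_{\bb} \beta_{i,\bb}(M)\, t^{|\bb|}$ on the $S$ side and $\sum_i (-1)^i \sum_{\bb} \beta_{i,\widetilde{\bb}}(\pol M)\, t^{\deg \widetilde{\bb}}$ on the $\wS$ side, and $\deg \widetilde{\bb} = |\bb|$, so the Betti identity does the job.

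The main obstacle is the bookkeeping in the Hilbert/$K$-polynomial computation rather than any conceptual difficulty: one must be careful that the denominator of the Hilbert series of $\pol(M)$ really is $(1-t)^{|\ba|}$ (the number of variables of $\wS$) and not something smaller, which is where minimality of the resolution and the fact that $\pol(M)$ is a genuine $\wS$-module (not merely a $\kk$-vector space) are used; alternatively one can avoid Hilbert series entirely for the dimension claim by noting $\dim_S M = n - \mathrm{grade}(\ann M)$ and relating associated primes of $\pol(M)$ to those of $M$ via $\eta$, but I expect the Hilbert-series route to be cleanest since it delivers the degree statement simultaneously. In either case the only real content is the Betti-number identity of the first paragraph; the remaining three formulas are formal consequences.
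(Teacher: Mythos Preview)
Your argument is correct, and for the Betti numbers and depth it coincides with the paper's proof (minimal free resolution, apply the exact functor, observe that minimality is preserved, then Auslander--Buchsbaum). The genuine divergence is in how you obtain the dimension and degree statements.

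The paper does \emph{not} compute Hilbert series. Instead, it identifies $\pol(M)$ with the ``lifted module'' of Bruns--Herzog \cite[Theorem~2.1]{BH} and invokes that theorem to conclude that the sequence
\[
\Theta := \{\, x_{i,1}-x_{i,j} \mid 1 \le i \le n,\ 2 \le j \le a_i \,\}
\]
is $\pol(M)$-regular with $\pol(M) \otimes_{\wS} \wS/(\Theta) \cong M$; the dimension and degree formulas then follow from the standard behavior of these invariants under quotienting by a regular sequence of length $|\ba|-n$. Your route, by contrast, reads both invariants off the $K$-polynomial, using that the $\ZZ$-graded Betti identity forces $K_{\pol(M)}(t)=K_M(t)$ and hence $H_{\pol(M)}(t)=H_M(t)\cdot(1-t)^{-(|\ba|-n)}$.

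What each buys: your approach is entirely self-contained and avoids the external citation; once the Betti identity is in hand, everything else is formal. The paper's approach, on the other hand, establishes the regularity of $\Theta$ on $\pol(M)$ as a byproduct, and this fact is actually used later in the paper (for the parallel treatment of $\dpol$, and as the reference point in Remark~\ref{Sbarra}(2) on generalized polarizations). So your argument proves the lemma more economically, but bypasses a structural fact the paper relies on downstream.
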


\begin{proof}
Let $F_\bullet$ be a $\ZZ^n$-graded minimal free resolution of $M$. 
Note that any indecomposable summand of $F_i$ is of the form 
$S(-\bb)$ for some $\bb \preceq \ba$.  
By the exactness of the functor $\pol$ and that $\pol(S(-\bb)) \cong \wS(-\widetilde{\bb})$, 
$\pol(F_\bullet)$ is a $\ZZ^{[\ba]}$-graded minimal free resolution of $\pol(M)$.  
Hence the assertions on Betti numbers and depths are clear. 
Moreover, we can check that $\pol(M)$ is the same thing as the ``{\it lifted module}" constructed in 
\cite[Theorem~2.1]{BH}.  Clearly, 
\begin{equation}\label{Theta}
\Theta := \{ x_{i,1}-x_{i,j} \mid 1 \leq i \leq n, \, 2 \leq j \leq a_i \, \}
\end{equation}
forms a regular sequence in $\wS$ of length $|\ba|-n$, and $\wS/(\Theta) \cong S$ 
in the natural way. 
By \cite[Theorem~2.1]{BH}, $\Theta$ also forms a $\pol(M)$-regular sequence, and  
$\pol(M) \otimes_{\wS} \wS/(\Theta) \cong M$.  
The assertions on dimensions and degrees follow from this fact. 
\end{proof}

Next we construct the ``reversed" version of $\pol$. 
Define the map $\iota: \NN^{[\ba]} \to \NN^n$  by 
$$
\iota(\bb)_i = \begin{cases}
a_i & \text{if $b_{i,j}>0$ for all $j$,}\\
a_i -\max \{ \, j \mid b_{i,j} =0 \, \} & \text{otherwise.}
\end{cases}
$$
As $\eta:\NN^{[\ba]} \to \NN^n$ gave the functor $\pol$, the map 
$\iota: \NN^{[\ba]} \to \NN^n$ defines the functor $\dpol:\mod_\ba S \to \Sq \wS$. 
For $\bb \in \NN^n$ with $\bb \preceq \ba$, we take 
$\widehat{\bb} \in \wS$ so that 
$$\widehat{b}_{i,j}=\begin{cases}
1 & \text{if $j \geq a_i- b_i+1$,}\\
0 & \text{otherwise.}
\end{cases}$$
Then we have $\dpol(S(-\bb))\cong \wS(-\widehat{\bb})$. 
Clearly, $\dpol$ is the same thing as $\pol$ modulo suitable exchange of variables 
of $\wS$. Hence Lemma~\ref{polarization basic} also holds for $\dpol$.  
Moreover, $\Theta \subset \wS$ of \eqref{Theta} also forms a $\dpol(M)$-regular sequence, 
and $\dpol(M) \otimes_{\wS} \wS/(\Theta) \cong M$.  

\begin{exmp}\label{confusing}
Polarization does not commute with degree shifts. 
For example, set $S:=\kk[x,y]$, $\wS:=\kk[x_1,x_2,y_1,y_2,y_3]$ and $\ba := (2,3)$. 
Then $(S/(x^2,y))(-(0,1)) \in \mod_\ba S$ and 
$$\opn{\mathsf pol}_\ba((S/(x^2,y))(-(0,1)))=\wS/(x_1x_2, y_2)(-\be),$$
where $\be \in \NN^{[\ba]}$ is the unit vector corresponding to $y_1$. 
Clearly, this is not isomorphic to neither $\opn{\mathsf pol}_\ba((S/(x^2,y))$ 
nor $\opn{\mathsf pol}^\ba((S/(x^2,y))$, even if we forget the grading. 
\end{exmp}

Since $\Ext_S^i(M,S(-\ba)) \in \mod_\ba S$ for all $M \in \mod_\ba S$, 
${\mathbf R}\Hom_S(-,S(-\ba))$ gives a functor 
$\Db(\mod_\ba S) \to \Db(\mod_\ba S)^\op$, which is denoted by $\DD_S$. 
Similarly, $\DD_{\wS}$ 	denotes the duality functor 
${\mathbf R}\Hom_{\wS}(-,\wS(-\one)): 
\Db(\Sq \wS) \to \Db(\Sq \wS)^\op$.  Here $\one \in \NN^{[\ba]}$ is the 
vector whose coordinate are all 1. Note that the exact functors $\pol$ and $\dpol$ 
can be extended to the functors $\Db(\mod_\ba S) \to \Db(\Sq \wS)$. 

\begin{thm}[{c.f. Sbarra \cite[Corollary~4.10]{Sb}}]
\label{local duality for polarization}
We have a natural isomorphism 
$$\DD_{\wS} \circ \pol \cong \dpol \circ \DD_S.$$ 
In particular, 
$$\Ext_{\wS}^i(\pol(M), \wS(-\one)) \cong \dpol(\Ext_S^i(M,S(-\ba)))$$
for all $M \in \mod_\ba S$. 
\end{thm}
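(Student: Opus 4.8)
\textbf{Proof plan for Theorem~\ref{local duality for polarization}.}
The plan is to mimic the proof of Theorem~\ref{local duality}, replacing the pair $((-)^{\tl\<i,j\>}, (-)^{\tr\<i,-j\>})$ by the pair $(\pol, \dpol)$ and the base change along $\sigma_{\<i,j\>}$ by the base change along $\eta$ and $\iota$. First I would fix $M^\bullet \in \Db(\mod_\ba S)$ and take a $\ZZ^n$-graded minimal free resolution $F^\bullet$, so that every indecomposable summand of $F^l$ is of the form $S(-\bb)$ with $\bb \preceq \ba$. By exactness of $\pol$ and the identity $\pol(S(-\bb)) \cong \wS(-\widetilde{\bb})$, the complex $\pol(F^\bullet)$ is a minimal free resolution of $\pol(M^\bullet)$; applying $\Hom_{\wS}(-,\wS(-\one))$ yields a minimal complex $C_1^\bullet := \Hom_{\wS}(\pol(F^\bullet), \wS(-\one))$ computing $\DD_{\wS}(\pol(M^\bullet))$. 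On the other side, $\Hom_S(F^\bullet, S(-\ba))$ is a minimal complex computing $\DD_S(M^\bullet)$, whose terms are again of the form $S(-\bb)$ with $\bb \preceq \ba$ (this uses positive $\ba$-determinedness of $\Ext_S^i(M,S(-\ba))$, stated in the excerpt), so $C_2^\bullet := \dpol(\Hom_S(F^\bullet, S(-\ba)))$ is a minimal free resolution of $\dpol(\DD_S(M^\bullet))$.

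The core computation is then to check that $C_1^\bullet$ and $C_2^\bullet$ are isomorphic as complexes. For a single summand $S(-\bb)$ of $F^l$ with $\bb \preceq \ba$, the corresponding summand of $C_1^l$ is $\Hom_{\wS}(\wS(-\widetilde{\bb}), \wS(-\one)) = \wS(\widetilde{\bb} - \one)$; I would verify the pointwise identity $\one - \widetilde{\bb} = \widehat{\ba - \bb}$ (the complement of the ``bottom'' staircase for $\bb$ is exactly the ``top'' staircase for $\ba - \bb$), so this summand equals $\wS(-\widehat{\ba-\bb})$. Meanwhile the summand $S(-\bb)$ of $F^l$ contributes the summand $S(\bb-\ba) \cong S(-(\ba-\bb))$ to $\Hom_S(F^\bullet, S(-\ba))$ in cohomological degree $-l$ (after the standard reindexing $F^{-l-1}\to F^{-l}$ dualizes to degree $l$), and applying $\dpol$ and using $\dpol(S(-\bc)) \cong \wS(-\widehat{\bc})$ with $\bc = \ba - \bb \preceq \ba$ gives $\wS(-\widehat{\ba-\bb})$ as well. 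Thus $C_1^l$ and $C_2^l$ have the same underlying graded free module, and — exactly as in the proof of Theorem~\ref{local duality} — tracking the matrices of the differentials through the transpose and through the two polarization functors shows the two complexes coincide, hence $\DD_{\wS}(\pol(M^\bullet)) \cong \dpol(\DD_S(M^\bullet))$. The naturality of this isomorphism follows because everything was built functorially from $F^\bullet$ and the identifications are natural in $\bb$.

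The main obstacle is the bookkeeping in the last step: one must be careful that the combinatorial identifications $\widetilde{(-)}$, $\widehat{(-)}$ and the complementation $\bc \mapsto \one - \bc$ on $\NN^{[\ba]}$ interact correctly with the sign/transpose conventions of $\mathbf{R}\!\Hom$, and that minimality is genuinely preserved on both sides so that ``isomorphic complexes'' is the right statement rather than merely ``quasi-isomorphic''. A clean way to organize this is to phrase it through the regular sequence $\Theta$ of \eqref{Theta}: both $\pol$ and $\dpol$ are ``lifts'' along $\wS \to \wS/(\Theta) \cong S$ in the sense of Lemma~\ref{polarization basic}, and $\DD_{\wS}$ commutes with quotient by a regular sequence up to the expected shift, so one can reduce the comparison modulo $\Theta$ to the tautology $\DD_S = \DD_S$ and then lift uniquely. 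Either route works; I would present the direct resolution comparison, as it parallels Theorem~\ref{local duality} most transparently. Finally, the ``in particular'' statement is immediate by taking $M^\bullet = M$ concentrated in degree $0$ and reading off cohomology, since $H^i(\DD_{\wS}(\pol(M))) = \Ext_{\wS}^i(\pol(M), \wS(-\one))$ and, by exactness of $\dpol$, $H^i(\dpol(\DD_S(M))) = \dpol(\Ext_S^i(M, S(-\ba)))$.
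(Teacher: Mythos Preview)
Your proposal is correct and follows essentially the same route as the paper: reduce to a bounded complex of free modules in $\mod_\ba S$, verify the key combinatorial identity $\one-\widetilde{\bb}=\widehat{\ba-\bb}$ so that $\DD_{\wS}(\pol(S(-\bb)))$ and $\dpol(\DD_S(S(-\bb)))$ agree term by term, and then check that the induced maps on differentials coincide, exactly as in the proof of Theorem~\ref{local duality}. The alternative route via the regular sequence $\Theta$ that you sketch is not used in the paper, but your primary argument matches it.
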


\begin{proof}
We will prove the assertion by a similar way to Theorem~\ref{local duality}. 
It suffices to show 
$\DD_{\wS} \circ \pol (F^\bullet) \cong \dpol \circ \DD_S (F^\bullet)$ 
for $F^\bullet \in \Db(\mod_\ba S)$ such that each $F^i$ is a free $S$-module. 
If $S(-\bb) \in \mod_\ba S$, then we have 
$\DD_{\wS}(\pol(S(-\bb)))= \DD_{\wS}(\wS(-\widetilde{\bb}))=\wS(-(\one-\widetilde{\bb}))$
and $\dpol(\DD_S(S(-\bb))) = \dpol(S(-(\ba-\bb)))= \wS(-\widehat{(\ba-\bb)})$.  
However, $\one-\widetilde{\bb} = \widehat{(\ba-\bb)}$ holds. In fact, for all $i,j$, we have  
$$(\one-\widetilde{\bb})_{i,j} = (\widehat{\ba-\bb})_{i,j}
=\begin{cases}
1 & \text{if $b_i < j \leq a_i$,}\\
0 & \text{if $1 \leq j \leq b_i$.}
\end{cases}
$$ 
Hence we have $\DD_{\wS}(\pol(F^\bullet))^i \cong \dpol (\DD_S(F^\bullet))^i$ for all $i$. 
It is easy to see that these isomorphisms induce an isomorphism of the complexes.    
\end{proof}

\begin{cor}\label{Cor for polarization}
For $M \in \mod_\ba S$, 
it is sequentially Cohen-Macaulay if and only if 
so is $\pol(M)$. The same is true for  Serre's condition $(S_r)$ of a monomial quotient $S/I$.      
\end{cor}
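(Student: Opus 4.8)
The plan is to deduce Corollary~\ref{Cor for polarization} from Theorem~\ref{local duality for polarization} in exactly the way that Corollary~\ref{seqCM for slide} was deduced from Theorem~\ref{local duality}, using Lemma~\ref{polarization basic} to keep track of the shift in dimensions. First recall the characterizations: $M$ is sequentially Cohen-Macaulay if and only if $\Ext_S^{n-i}(M,S)$ is Cohen-Macaulay of dimension $i$ or zero for every $i$, and for $r\ge 2$, $S/I$ satisfies $(S_r)$ if and only if $\dim_S\Ext_S^{n-i}(S/I,S)\le i-r$ for all $i<\dim S/I$. Since $\Ext_S^{n-i}(M,S)$ and $\Ext_S^{n-i}(M,S(-\ba))$ differ only by a degree shift, Theorem~\ref{local duality for polarization} gives
$$\Ext_{\wS}^{n-i}(\pol(M),\wS)\cong \dpol\bigl(\Ext_S^{n-i}(M,S)\bigr)$$
up to a shift (here $\wS$ has $|\ba|$ variables, so the ``$n$'' for $\wS$ is $|\ba|$, but the comparison is cleanest if one first checks that $\Ext_{\wS}^{|\ba|-i'}(\pol(M),\wS)$ vanishes unless $i'$ has the form appearing below).

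The key numerical step is to align the indices. By Lemma~\ref{polarization basic} (applied to $\dpol$, which it covers), for any $N\in\mod_\ba S$ one has $\dim_{\wS}\dpol(N)=\dim_S N+|\ba|-n$ and $\depth_{\wS}\dpol(N)=\depth_S N+|\ba|-n$; in particular $\dpol(N)$ is Cohen-Macaulay if and only if $N$ is, and $\dpol(N)=0$ if and only if $N=0$. Writing $c:=|\ba|-n$, the module $E_i:=\Ext_S^{n-i}(M,S)$ lies in $\mod_\ba S$, and $\Ext_{\wS}^{|\ba|-(i+c)}(\pol(M),\wS)\cong\dpol(E_i)$ up to shift. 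Thus $\Ext_{\wS}^{|\ba|-i'}(\pol(M),\wS)$ is nonzero only for $i'$ of the form $i+c$, and for such $i'$ it is Cohen-Macaulay of dimension $i+c=i'$ if and only if $E_i$ is Cohen-Macaulay of dimension $i$ or zero. Therefore $\pol(M)$ is sequentially Cohen-Macaulay if and only if $M$ is. For the $(S_r)$ statement with $r\ge2$, apply the dimension criterion: $\dim_{\wS}\Ext_{\wS}^{|\ba|-i'}(\pol(S/I),\wS)=\dim_S E_i + c \le (i-r)+c=i'-r$ exactly when $\dim_S E_i\le i-r$, and the range $i<\dim_S S/I$ matches $i'<\dim_{\wS}\pol(S/I)$ by Lemma~\ref{polarization basic}; the case $r=1$ follows directly from the equality of associated primes, which is visible from $\dpol(N)=0\iff N=0$ together with the depth formula, or more simply from $\pol(M)\otimes_{\wS}\wS/(\Theta)\cong M$.

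The main obstacle I anticipate is bookkeeping rather than anything conceptual: one must be careful that the duality in Theorem~\ref{local duality for polarization} uses $\wS(-\one)$ and $S(-\ba)$ rather than $\wS$ and $S$, so the $\Ext$ modules appearing there are twists of the ones in the sequentially-Cohen-Macaulay and $(S_r)$ criteria, and one should confirm that a $\ZZ^{[\ba]}$-graded (resp. $\ZZ^n$-graded) degree shift changes neither Cohen-Macaulayness, nor dimension, nor the set of associated primes — all of which is standard. A second minor point is that $\Ext_{\wS}^j(\pol(M),\wS)$ is indexed by $j$ running over $0,\dots,|\ba|$ while the nonvanishing ones are concentrated in the shifted range $c,\dots,c+n$; stating the sequentially-Cohen-Macaulay condition for $\wS$ and matching it term-by-term with that for $S$ requires noting this concentration, which is immediate from the isomorphism with $\dpol(\Ext_S^{n-i}(M,S))$.
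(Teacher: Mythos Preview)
Your proposal is correct and follows essentially the same approach as the paper: the paper's proof is simply the sentence ``By Lemma~\ref{polarization basic} and Theorem~\ref{local duality for polarization}, the same argument as Corollary~\ref{seqCM for slide} works,'' and you have spelled out exactly that argument, including the index shift by $c=|\ba|-n$ and the separate handling of $(S_1)$.
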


\begin{proof}
By Lemma~\ref{polarization basic} and Theorem~\ref{local duality for polarization}, 
the same argument as Corollary~\ref{seqCM for slide} works. 
\end{proof}

The {\it arithmetic degree} $\adeg_S(M)$ of a finitely generated $S$-module $M$ 
is the ``degree" reflecting the contribution of all associated primes of $M$ 
(the usual degree $\deg(M)$ only concerns minimal primes $\fp$ of $M$ with 
$\dim S/\fp = \dim M$). In \cite[Proposition~1.11]{V}, it is shown that 
$$\adeg_S M = \sum_{i=0}^n d_i(M),$$
where
$$d_i(M)= \begin{cases}
\deg_S \Ext_S^{n-i}(M,S) & \text{if $\dim_S \Ext_S^{n-i}(M,S)=i$,}\\
0 & \text{otherwise.}
\end{cases}$$

\begin{cor}[{c.f. Fr\"{u}bis-Kruger and Terai~\cite{FT}}]\label{adeg}
For $M \in \mod_{\ba} S$, we have 
$$\adeg_S (M) = \adeg_{\wS} (\pol(M)).$$
\end{cor}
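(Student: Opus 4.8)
The plan is to reduce the equality of arithmetic degrees to the behavior of the individual summands $d_i(-)$ under polarization, and then to invoke the structural results already established, namely Lemma~\ref{polarization basic} and Theorem~\ref{local duality for polarization}. By \cite[Proposition~1.11]{V}, $\adeg_S M = \sum_{i=0}^n d_i(M)$ with $d_i(M)=\deg_S\Ext_S^{n-i}(M,S)$ when $\dim_S\Ext_S^{n-i}(M,S)=i$ and $d_i(M)=0$ otherwise, and similarly $\adeg_{\wS}(\pol(M)) = \sum_{k=0}^{N} d_k(\pol(M))$ where $N:=|\ba|$ is the number of variables of $\wS$. So it suffices to match up the nonzero contributions on the two sides.

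First I would fix $M\in\mod_\ba S$ and set $E^l:=\Ext_S^l(M,S)$ for $0\le l\le n$. Theorem~\ref{local duality for polarization} gives $\Ext_{\wS}^l(\pol(M),\wS(-\one))\cong\dpol(\Ext_S^l(M,S(-\ba)))$; since degree shifts do not affect dimension or (usual) degree, this yields $\Ext_{\wS}^l(\pol(M),\wS)\cong\dpol(E^l)$ up to a shift, hence $\dim_{\wS}\Ext_{\wS}^l(\pol(M),\wS)=\dim_{\wS}\dpol(E^l)$ and $\deg_{\wS}\Ext_{\wS}^l(\pol(M),\wS)=\deg_{\wS}\dpol(E^l)$. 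Now apply the dimension and degree formulas of Lemma~\ref{polarization basic} (valid for $\dpol$ as well, as noted right after its proof): $\dim_{\wS}\dpol(E^l)=\dim_S E^l+N-n$ and $\deg_{\wS}\dpol(E^l)=\deg_S E^l$. Combining, $\Ext_{\wS}^l(\pol(M),\wS)$ has dimension $\dim_S E^l+(N-n)$ and the same degree as $E^l$.

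Next I would translate the index bookkeeping. On the $S$-side, $d_i(M)\ne 0$ exactly when $\dim_S E^{n-i}=i$, contributing $\deg_S E^{n-i}$. On the $\wS$-side, $d_k(\pol(M))\ne 0$ exactly when $\dim_{\wS}\Ext_{\wS}^{N-k}(\pol(M),\wS)=k$; writing $l:=N-k$ and using the dimension identity, this reads $\dim_S E^l+(N-n)=N-l$, i.e. $\dim_S E^l=n-l$, i.e. (setting $i:=n-l$) $\dim_S E^{n-i}=i$, and the contribution is $\deg_{\wS}\dpol(E^l)=\deg_S E^{n-i}$. Thus the map $i\mapsto k=N-l=N-(n-i)=i+(N-n)$ is a bijection between the nonzero-contributing indices on the two sides, preserving the value of the contribution; summing over all such indices gives $\adeg_S M=\adeg_{\wS}(\pol(M))$.

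The only genuine subtlety, and the step I would be most careful about, is making sure that shifting by $S(-\ba)$ versus $S$ (and $\wS(-\one)$ versus $\wS$) is harmless here: $\deg$ and $\dim$ are indeed invariant under $\ZZ^n$- or $\ZZ^{[\ba]}$-graded shifts, so replacing $\DD_S={\mathbf R}\Hom_S(-,S(-\ba))$ and $\DD_{\wS}={\mathbf R}\Hom_{\wS}(-,\wS(-\one))$ by the unshifted Ext-modules changes nothing in the computation of $d_i$. I would also remark that the dimension bookkeeping automatically guarantees that $d_i(M)=0$ (on the $S$-side) corresponds to $d_{i+(N-n)}(\pol(M))=0$ (on the $\wS$-side) and that no $k$ outside the image $[N-n,\,N]$ can contribute, since $\dim_{\wS}\Ext_{\wS}^{N-k}(\pol(M),\wS)\ge N-n$ whenever the module is nonzero (again by Lemma~\ref{polarization basic}), forcing $k\ge N-n$; likewise $k\le N$ since $\Ext^{<0}$ vanishes. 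Hence the two sums have exactly the same terms, and the corollary follows.
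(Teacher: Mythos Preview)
Your proposal is correct and follows exactly the route the paper intends: the paper's own proof consists of the single sentence ``The assertion follows from Lemma~\ref{polarization basic} and Theorem~\ref{local duality for polarization},'' and you have simply unpacked that sentence with the index bookkeeping $i\mapsto i+(|\ba|-n)$ and the observation that dimension and degree are shift-invariant. One small expository point: when you write ``$\Ext_{\wS}^l(\pol(M),\wS)\cong\dpol(E^l)$ up to a shift,'' note that $E^l=\Ext_S^l(M,S)$ need not lie in $\mod_\ba S$, so $\dpol(E^l)$ is not literally defined (and, as Example~\ref{confusing} warns, polarization does not commute with shifts); the clean statement is to apply $\dpol$ to $\Ext_S^l(M,S(-\ba))\in\mod_\ba S$ and then use shift-invariance of $\dim$ and $\deg$ on both sides, which is what your final paragraph effectively says.
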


\begin{proof}
The assertion follows from 
Lemma~\ref{polarization basic} and Theorem~\ref{local duality for polarization}.  
\end{proof}

When $M=S/I$ for a monomial ideal $I$, Corollary~\ref{adeg} has been given by 
Fr\"{u}bis-Kruger and Terai \cite{FT}.  
Our proof also works for the {\it homological degree} $\hdeg_S (M)$ of $M$, 
which is defined by the iterated use of the operation $\Ext_S^i(-,S)$ 
(see \cite{V} for detail).  Hence the following equation holds;
$$\hdeg_S (M) = \hdeg_{\wS} (\pol(M)).$$  

\begin{rem}\label{Sbarra}
(1) As mentioned before, 
Bruns and Herzog \cite{BH} gave the same construction as  $\pol :\mod_\ba S \to \Sq \wS$, 
but they did not recognize it as a functor. 
On the other hand, Sbarra \cite{Sb} treated a polarization as a functor. 
However he used the convention something like 
$\opn{\mathsf{pol}}_{\pm \be_i}:\mod_{\ZZ^n} S \to \mod_{\ZZ^n} S'$ with $S'=S[x_i']$.  
Roughly speaking, his functor is similar to our $\pol$ (resp. $\dpol$) 
in the positive (resp. negative) degree parts. Hence he did not need the pair $\pol$ 
and $\dpol$, while \cite[Corollary~4.10]{Sb} corresponds to our 
Theorem~\ref{local duality for polarization}.   

(2) For a monomial ideal $I \subset S$, 
we say a squarefree monomial ideal $J$ of 
$\wS=\kk[\, x_{i,j} \mid 1 \leq i \leq n, 1 \leq j \leq a_i \,]$ 
is a {\it generalized polarization}, 
if  $\Theta \subset \wS$ of \eqref{Theta}  
forms a $\wS/J$-regular sequence, and $\wS/J \otimes_{\wS} \wS/(\Theta) \cong S/I$ 
through the natural identification $\wS/(\Theta) \cong S$. 
(If the latter condition is satisfied, the former is equivalent to 
the condition that $\beta_{i,j}^S(I)=\beta_{i,j}^{\wS}(J)$ for all $i,j$.)  
Of course, any $I$ has the standard polarization $\pol(I)$ (or $\dpol(I)$), but  
some ideal has alternative one. 

For example, let $I:=(x^2y, x^2z, xyz, xz^2, y^3, y^2z,yz^2)$ be an ideal of $S:=\kk[x,y,z]$.   
Then the ideal 
$$J=(x_1x_2y_3, x_1x_2z_3,x_1y_2z_3, x_1z_2z_3, y_1y_2y_3, y_1y_2z_3,y_1z_2z_3)$$
of $\wS:=\kk[x_1,x_2,y_1, y_2,y_3,z_1,z_2,z_3]$ is a generalized polarization of $I$ (the variable $z_1$ 
does not appear in the generators of $J$, but we prepare it for the clean presentation). 
{\it Macaulay2} computation shows that $\deg \Ext^3_S(S/I,S)=6$ and  
$\deg \Ext^3_{\wS}(\wS/J,\wS)=5$. Hence Theorem~\ref{local duality for polarization}, 
Corollaries~\ref{Cor for polarization} and \ref{adeg} might fail for non-standard polarizations.       

The author was told generalized polarizations of the above type by G. Fl\o ystad.   
\end{rem}

\section{Relation to Bier-Murai sphere}
For $i \in [n]$, set 
$$S':= \kk[x_1, \ldots, x_{i-1}, x_{i+1}, \ldots, x_n, x_{i,1}, x_{i,2}] 
\cong S[x_i'].$$  
Then we can consider the functor 
$\opn{\mathsf{pol}}_{\one+ \be_i} \circ (-)^{\tl \<i, 1 \>} : \Sq S \to 
\Sq S'$. This functor sends a squarefree monomial ideal $I \subset S$ to 
the squarefree  monomial ideal $I' \subset S'$ given by replacing $x_i$ 
(appearing in the minimal generators of $I$) 
by $x_{i,1} \cdot x_{i,2}$.  Let $\Delta$ (resp. $\Delta'$) be a simplicial complex 
whose Stanley-Reisner ideal is $I$ (resp. $I'$). In \cite{BH}, $\Delta'$ is called  
a {\it 1-vertex inflation} of $\Delta$. Hence we denote the functor 
$\opn{\mathsf{pol}}_{\one+ \be_i} \circ (-)^{\tl \<i,1\>}$ by $\infl_i$. 

\begin{lem}
For $M \in \Sq S$, we have 
$\beta_j^S(M)=\beta_j^{S'}(\infl_i(M))$, $\dim_{S'} (\infl_i(M))= \dim_S M+1$, and 
$\opn{depth}_{S'} (\infl_i(M))= \depth M+1$. Moreover, 
$M$ is sequentially Cohen-Macaulay if and only if 
so is $\infl_i(M)$. The same is true for  Serre's condition $(S_r)$ of 
a monomial quotient $S/I$.   
\end{lem}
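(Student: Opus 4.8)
The plan is to decompose $\infl_i = \opn{\mathsf{pol}}_{\one+\be_i} \circ (-)^{\tl \<i,1\>}$ into its two constituent functors and apply the invariance results already established for each one separately. First I would observe that for $M \in \Sq S$ we have $M^{\tl \<i,1\>} \in \mod_{\one+\be_i} S$ (this is the remark immediately after the definition of $\mod_\ba S$, with $\ba = \one$), so the composite is well-defined as a functor $\Sq S \to \Sq S'$ and the hypotheses of Lemma~\ref{polarization basic} apply to $M^{\tl \<i,1\>}$ with $\ba = \one + \be_i$, hence $|\ba| - n = 1$.

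For the numerical statements I would chain the two sets of identities. By Proposition~\ref{triangles preserve} (graded Betti numbers are preserved under $(-)^{\tl \<i,1\>}$, hence so are the coarse $\beta_j$), together with the equality $\beta_j^{\wS}(\pol(N)) = \beta_j^S(N)$ from Lemma~\ref{polarization basic}, we get $\beta_j^{S'}(\infl_i(M)) = \beta_j^{S'}(\pol(M^{\tl \<i,1\>})) = \beta_j^S(M^{\tl \<i,1\>}) = \beta_j^S(M)$. For dimension and depth: Proposition~\ref{triangles preserve} gives $\dim_S M^{\tl \<i,1\>} = \dim_S M$ and $\depth M^{\tl \<i,1\>} = \depth M$, and then Lemma~\ref{polarization basic} (with $|\ba| - n = 1$) gives $\dim_{S'}(\infl_i(M)) = \dim_S M^{\tl \<i,1\>} + 1 = \dim_S M + 1$ and likewise $\opn{depth}_{S'}(\infl_i(M)) = \depth M^{\tl \<i,1\>} + 1 = \depth M + 1$. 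One should note here that $S' \cong \wS$ when $\ba = \one + \be_i$, so Lemma~\ref{polarization basic} is literally applicable.

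For the homological properties I would invoke the corresponding stability results in turn. The functor $(-)^{\tl \<i,1\>}$ preserves the sequentially Cohen-Macaulay property and Serre's condition $(S_r)$ by Corollary~\ref{seqCM for slide}; the functor $\pol$ preserves them by Corollary~\ref{Cor for polarization}. Composing, $M$ is sequentially Cohen-Macaulay iff $M^{\tl \<i,1\>}$ is iff $\pol(M^{\tl \<i,1\>}) = \infl_i(M)$ is, and similarly for $(S_r)$ on a monomial quotient (using that $I^{\tl \<i,1\>}$ is again a monomial ideal and $\opn{\mathsf{pol}}_{\one+\be_i}$ of it is its squarefree polarization, so the quotient statement is preserved at each stage). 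I do not anticipate a genuine obstacle here; the only point requiring a little care is bookkeeping the polynomial ring: after applying $(-)^{\tl \<i,1\>}$ the relevant $\ba$ becomes $\one + \be_i$ rather than $\one$, and one must check that the ring $\wS$ attached to this $\ba$ is exactly the $S'$ of the definition and that $I^{\tl\<i,1\>}$ is positively $(\one+\be_i)$-determined so that Corollary~\ref{Cor for polarization} applies — both of which are immediate from the constructions.
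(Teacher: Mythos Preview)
Your proposal is correct and follows essentially the same approach as the paper: decompose $\infl_i = \opn{\mathsf{pol}}_{\one+\be_i} \circ (-)^{\tl \<i,1\>}$ and invoke the previously established invariance results for each factor (Proposition~\ref{triangles preserve} and Corollary~\ref{seqCM for slide} for the sliding functor, Lemma~\ref{polarization basic} and Corollary~\ref{Cor for polarization} for polarization), noting that polarization with $\ba = \one+\be_i$ raises dimension and depth by~$1$. The paper's proof is simply a one-sentence summary of exactly this chain; your version spells out the bookkeeping in more detail, but there is no substantive difference.
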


\begin{proof}
Since the corresponding statements hold for both $(-)^{\tl \<i,1\>}$ and 
$\opn{\mathsf{pol}}_{\one+ \be_i}$ (the latter raises the dimension and depth by 1, 
while the former preserves them), 
the assertion holds for the composition  
$\infl_i = \opn{\mathsf{pol}}_{\one+ \be_i} \circ (-)^{\tl \<1, i \>}$. 
\end{proof}

Miller (\cite{M}) introduced the {\it Alexander duality functor} 
$\sA_\ba : \mod_\ba S \to (\mod_\ba S)^\op$ as follows:   
Let  $M \in \mod_\ba S$, and take $\bb \in \NN^n$ with $\bb \preceq \ba$.  
Set $(\sA_{\ba}(M))_\bb$ to be the dual $\kk$-vector space of $M_{\ba-\bb}$.  
If $b_i \geq 1$, the multiplication map 
$(\sA_\ba(M))_{\bb - \be_i} \ni y \longmapsto x_i y \in (\sA_\ba(M))_\bb$ 
is the $\kk$-dual of $M_{\ba-\bb} \ni z \longmapsto x_i z \in M_{\ba-\bb+\be_i}$. 
 
For a monomial ideal $I \subset S$ with $I \in \mod_\ba S$, 
$I^{\vee\ba} := \sA_\ba(S/I) \in \mod_\ba S$ 
can be regarded as a monomial ideal of $S$. In fact, we have 
$$I^{\vee \ba} 
= ( \, x^\bb \mid \text{$\bb \in \NN^n$, $\bb \preceq \ba$ and $x^{\ba-\bb} \not \in I$} \, ).$$

\begin{lem}\label{Alex dual}
Let $I$ be  a monomial ideal with $I \in \mod_\ba S$. 
For $\bb \in \NN^n$ with $\bb \preceq \ba$, 
$x^\bb$ is a minimal generators of  $I^{\vee \ba}$ if and only if $x^{\tau_{\<i, a_i+2-j\>}(\bb)}$ 
is a minimal generator of  $(I^{\tl\<i,j\>})^{\vee \ba +\be_i}$. 
\end{lem}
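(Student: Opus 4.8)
The plan is to unwind the definitions of both Alexander duality and the sliding functor and to show that the two monomial ideals in question have the same set of minimal generators after the coordinate change $\bb \mapsto \tau_{\<i,a_i+2-j\>}(\bb)$. Throughout we work with $\ba' := \ba+\be_i$, so that $I^{\tl\<i,j\>} \in \mod_{\ba'} S$ by the remark after the definition of $\mod_\ba S$, and $(I^{\tl\<i,j\>})^{\vee\ba'}$ makes sense. The combinatorial heart of the matter is the explicit description
$$I^{\vee\ba} = (\, x^\bb \mid \bb \preceq \ba,\ x^{\ba-\bb}\notin I\,),$$
and the fact (from Proposition~\ref{triangles preserve}, or directly from $I^{\tl\<i,j\>}=(x^{\tau(\ba_1)},\dots,x^{\tau(\ba_r)})$) that for $\bc \in \NN^n$ with $c_i \ne j-1$ one has $x^\bc \in I^{\tl\<i,j\>}$ if and only if $x^{\sigma_{\<i,j\>}(\bc)} \in I$.

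First I would reduce the statement about minimal generators to a statement about the underlying monomial ideals as sets of monomials. A monomial $x^\bb$ is a minimal generator of a monomial ideal $L$ iff $x^\bb \in L$ but $x^{\bb-\be_k} \notin L$ for every $k$ with $b_k \ge 1$. So it suffices to check two things: (a) membership is preserved, i.e. $x^\bb \in I^{\vee\ba}$ iff $x^{\tau'(\bb)} \in (I^{\tl\<i,j\>})^{\vee\ba'}$, where $\tau' := \tau_{\<i,a_i+2-j\>}$; and (b) the map $\bb \mapsto \tau'(\bb)$ matches up the "one step down in each coordinate" neighbours appropriately — i.e. $x^{\tau'(\bb)-\be_k}\in(I^{\tl\<i,j\>})^{\vee\ba'}$ for some $k$ iff $x^{\bb-\be_k}\in I^{\vee\ba}$ for some $k$. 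For (b) the coordinates $k\ne i$ are immediate since $\tau'$ acts as the identity there; the coordinate $k=i$ needs the compatibility of $\tau'$ with decrementing the $i$-th coordinate, which is where the "$\sigma\circ\tau=\mathrm{Id}$, $\tau\circ\sigma\ne\mathrm{Id}$" subtlety enters, exactly as in the proof of Proposition~\ref{triangles preserve}.

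The key computation for (a) is to track the Alexander-dual exponent through the sliding. Write $\bb^c := \ba - \bb$ for the "complementary" exponent with respect to $\ba$, and $\bb^{c'} := \ba' - \tau'(\bb)$ with respect to $\ba'$. The point is that $\ba' - \tau'(\bb)$, as a function of $\bb$, is (up to the index shift $j \leftrightarrow a_i+2-j$ that reverses the $i$-th coordinate) exactly $\sigma_{\<i,j\>}$ applied to $\ba-\bb$ — one checks directly on the $i$-th coordinate that if $b_i \ge a_i+2-j$ then $\tau'$ adds $1$ and $(\ba'-\tau'(\bb))_i = a_i+1-b_i-1 = a_i-b_i$, while $\sigma_{\<i,j\>}$ leaves $(\ba-\bb)_i=a_i-b_i$ unchanged exactly when $a_i-b_i<j$, i.e. $b_i>a_i-j$, i.e. $b_i\ge a_i-j+1=a_i+2-j-1$... so the two conditions must be reconciled carefully, and this bookkeeping is the main obstacle. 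Granting the identity $\ba'-\tau'(\bb) = \sigma_{\<i,j\>}(\ba-\bb)$ (with the appropriate handling of the boundary case $b_i = a_i+2-j-1$, where $\sigma$ and $\tau'$ both fix things and one uses the bijective multiplication by $x_i$), we then get
$$x^{\tau'(\bb)}\in (I^{\tl\<i,j\>})^{\vee\ba'} \iff x^{\ba'-\tau'(\bb)}\notin I^{\tl\<i,j\>} \iff x^{\sigma_{\<i,j\>}(\ba-\bb)}\notin I \iff x^{\ba-\bb}\notin I \iff x^\bb\in I^{\vee\ba},$$
using the membership criterion for $I^{\tl\<i,j\>}$ in the middle step. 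Here one must separate the case where $(\ba-\bb)_i = j-1$, handled by the bijectivity of $x_i\cdot$ on the relevant graded pieces of $S/I^{\tl\<i,j\>}$, exactly as in Proposition~\ref{triangles preserve}'s treatment of associated primes. I expect this index-reversal bookkeeping — getting the offset $a_i+2-j$ to line up the "down-slide" on the Alexander-dual side with the "up-slide" $\tau$ on the primal side — to be the only real difficulty; once the exponent identity is nailed down, (a) is a one-line chain of equivalences and (b) follows from $\tau'$ being a bijection on coordinates $\ne i$ together with the same boundary analysis.
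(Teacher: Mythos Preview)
Your overall plan is the same as the paper's: establish a membership correspondence between $I^{\vee\ba}$ and $(I^{\tl\<i,j\>})^{\vee\ba+\be_i}$ via the exponent calculus on complements, and then upgrade to minimal generators by treating the boundary index separately. The problem is in the execution of the key identity. The equation you ``grant'', namely $\ba'-\tau'(\bb)=\sigma_{\<i,j\>}(\ba-\bb)$ with $\tau'=\tau_{\<i,a_i+2-j\>}$, is \emph{not} merely off at the single value $b_i=a_i+1-j$; it fails on the whole range $b_i\le a_i-j$. Indeed, for such $b_i$ one has $(\ba'-\tau'(\bb))_i=a_i+1-b_i$, whereas $(\sigma_{\<i,j\>}(\ba-\bb))_i=a_i-b_i-1$, a discrepancy of $2$. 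Consequently the displayed chain breaks down: neither the substitution of the identity nor the subsequent step ``$x^{\sigma_{\<i,j\>}(\ba-\bb)}\notin I\iff x^{\ba-\bb}\notin I$'' is valid (the latter is generally false since $\sigma_{\<i,j\>}$ is not injective).

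What actually holds is $\ba'-\tau_{\<i,a_i+2-j\>}(\bb)=\tau_{\<i,j-1\>}(\ba-\bb)$, and since $\sigma_{\<i,j\>}\circ\tau_{\<i,j-1\>}=\opn{Id}$ one gets $\sigma_{\<i,j\>}(\ba'-\tau'(\bb))=\ba-\bb$ directly; plugging this into the membership criterion $x^\bc\in I^{\tl\<i,j\>}\iff x^{\sigma_{\<i,j\>}(\bc)}\in I$ gives your chain cleanly in three steps. The paper takes the mirror route: it applies $\tau_{\<i,j\>}$ (not $\sigma$) to the complement $\ba-\bb$, computes $\ba+\be_i-\tau_{\<i,j\>}(\ba-\bb)=\tau_{\<i,a_i+1-j\>}(\bb)$, and hence obtains the membership correspondence via $\tau_{\<i,a_i+1-j\>}$ rather than $\tau_{\<i,a_i+2-j\>}$. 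The shift from $a_i+1-j$ to $a_i+2-j$ is then forced precisely at the level of \emph{minimal} generators: when $b_i=a_i+1-j$ the bijectivity of multiplication by $x_i$ on the degree-$(j-1)$ piece of $I^{\tl\<i,j\>}$ dualizes to show that $x^{\bb+\be_i}$ is never a minimal generator of the new dual, so $\tau_{\<i,a_i+1-j\>}(\bb)=\bb+\be_i$ must be replaced by $\tau_{\<i,a_i+2-j\>}(\bb)=\bb$. Either corrected route works, but as written your argument has a genuine computational gap at the heart of step~(a).
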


\begin{proof}
Clearly, $x^\bb \in I^{\vee \ba}$ if and only if $x^{\ba-\bb} \not \in I$ if and only if 
$x^{\tau_{\<i,j\>}(\ba-\bb)} \not \in I^{\tl \<i, j\>}$ if and only if 
$x^{\ba+\be_i -\tau_{\<i,j\>}(\ba-\bb)} \in 
(I^{\tl \<i,j\>})^{\vee \ba+\be_i}$.  
Since $(\ba+\be_i -\tau_{\<i,j\>}(\ba-\bb))_k = b_k$ for all $k \ne i$ and 
$$
(\ba+\be_i -\tau_{\<i,j\>}(\ba-\bb))_i =\begin{cases}
b_i +1 & \text{if $a_i-b_i  < j$ \, (equivalently, $b_i > a_i -j$),}\\
b_i  & \text{if $a_i-b_i  \geq j$ \, (equivalently, $b_i \leq a_i -j$),}
\end{cases}
$$
we have $\ba+\be_i -\tau_{\<i,j\>}(\ba-\bb)= \tau_{\<i, a_i+1-j \>}(\bb)$. 
Hence $x^\bb \in I^{\vee \ba}$ if and only if 
$x^{\tau_{\<i,a_i+1-j\>}(\bb)} \in (I^{\tl \<i,j\>})^{\vee \ba+\be_i}$.  
However, essentially because $\tau_{\<\bullet, \bullet\>}$ is not surjective, 
$\tau_{\<i, a_i+2-j\>}$ appears. 

Throughout this paragraph, we assume that $b_i= a_i+1-j$. 
Since the multiplication map 
$[I^{\tl\<i,j\>}]_\bc \ni y \longmapsto x_i y \in [I^{\tl\<i,j\>}]_{\bc+\be_i}$  
is bijective for all $\bc \in \NN^n$ with $c_i=j-1$, the multiplication map 
$[(I^{\tl\<i,j\>})^{\vee \ba+\be_i}]_\bb \ni y \longmapsto x_i y \in 
[(I^{\tl\<i,j\>})^{\vee \ba+\be_i}]_{\bb+\be_i}$ is bijective. 
Hence, if $x^{\bb+\be_i} \in (I^{\tl\<i,j\>})^{\vee \ba+\be_i}$, then $x^\bb$ also belong to 
$(I^{\tl\<i,j\>})^{\vee \ba+\be_i}$. It follows that $x^\bb$ is a minimal generator of 
$I^{\vee \ba}$ if and only if it is also a minimal generator of $(I^{\tl\<i,j\>})^{\vee \ba+\be_i}$. 
Since $\tau_{\<i, a_i+2-j\>}(\bb)=\bb$ in this case, the assertion holds. 

In the rest of this proof, we assume that $b_i\ne a_i+1-j$. 
It is easy to see that $x^\bb$ is a minimal generator of $I^{\vee \ba}$ if and only if 
$x^{\tau_{\<i, a_i+1-j\>}(\bb)}$ is a minimal generator of $(I^{\tl\<i,j\>})^{\vee \ba+\be_i}$. 
Since we have  $\tau_{\<i, a_i+1-j\>}(\bb)=\tau_{\<i, a_i+2-j\>}(\bb)$ now, we are done.
\end{proof}

In the rest of the paper, set
$$\wS:=\kk[\, x_{i,j} \mid 1 \leq i \leq n, 1 \leq j \leq a_i+1 \,].$$
Note that $\dPol(x^\bb)=\prod_{i \in [n]} x_{i,a_i+1} x_{i,a_i} \cdots x_{i, a_i-b_i+2} \in \wS$.  
For a positively $\ba$-determined monomial ideal $I \subset S$, 
consider the squarefree monomial ideal 
$$\BM_{\ba}(I):= \Pol(I)+\dPol(I^{\vee \ba})+( \, \prod_{m=1}^{a_l+1}x_{l,m} \mid l=1,\ldots,n \, )$$
of $\wS$. Let $\BBM_\ba(I)$ be the simplicial complex whose Stanley-Reisner ring is $S/\BM_{\ba}(I)$. 
Murai \cite{Mu} showed that the geometric realization of $\BBM_\ba(I)$ is homeomorphic to a sphere of dimension  $|\ba|-2$. 

\begin{prop}\label{BM}
For all $i \in [n]$ and an integer $j$ with $1 \leq j \leq a_i+1$, we have 
$$\BM_{\ba+\be_i}(I^{\tl \< i,j \>})\cong 
\infl_{(i,j)} (\BM_\ba(I)),$$
where ``$\cong$" means the equivalence via suitable identification of the variables 
of the two polynomial rings (note that the each side of the above equation are monomial 
ideals of polynomial rings of $|\ba|+n+1$ variables, but the rings are not the same) and 
$\infl_{(i,j)}: \Sq \wS \to \Sq (\wS[x_{i,j}'])$ denotes the inflation at the variable $x_{i,j}$. 
\end{prop}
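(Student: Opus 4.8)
The plan is to prove the identity at the level of generating monomials, using Lemma~\ref{Alex dual} to handle the Alexander-dual summand and the already-established interaction between $(-)^{\tl\<i,j\>}$, polarization, and inflation to handle the first summand. Write $\BM_{\ba}(I)$ as the sum of three pieces: the ``positive'' polarization part $\Pol(I)$, the ``negative'' part $\dPol(I^{\vee\ba})$, and the ``link'' part $L_\ba := (\,\prod_{m=1}^{a_l+1}x_{l,m}\mid l=1,\dots,n\,)$. The strategy is to show that applying $\infl_{(i,j)}$ to each of these three pieces produces, after the identification of variables, exactly the corresponding piece of $\BM_{\ba+\be_i}(I^{\tl\<i,j\>})$. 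Recall from \S5 that $\infl_{(i,j)}$ is by definition $\opn{\mathsf{pol}}_{\one+\be_{(i,j)}}\circ(-)^{\tl\<(i,j),1\>}$ acting on $\Sq\wS$, i.e. it replaces the variable $x_{i,j}$ (wherever it occurs in a minimal generator) by $x_{i,j}\cdot x_{i,j}'$.

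First I would treat the part $\Pol(I)$. A minimal generator of $\Pol(I)$ has the form $\prod_{k\in[n]} x_{k,1}x_{k,2}\cdots x_{k,b_k}$ for $x^\bb$ a minimal generator of $I$. Passing $I$ to $I^{\tl\<i,j\>}$ replaces each such $\bb$ by $\tau_{\<i,j\>}(\bb)$, which either fixes $b_i$ (when $b_i<j$) or raises it to $b_i+1$ (when $b_i\ge j$); accordingly $\Pol_{\ba+\be_i}(I^{\tl\<i,j\>})$ has the generator in which the block $x_{i,1}\cdots x_{i,b_i}$ is either untouched or has one extra factor $x_{i,b_i+1}$ inserted at the ``top'' of the consecutive run starting at $x_{i,1}$. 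On the other hand, $\infl_{(i,j)}$ applied to $\Pol(I)$ doubles the variable $x_{i,j}$, i.e. inserts a factor $x_{i,j}'$ exactly into those generators whose $x_i$-block $x_{i,1}\cdots x_{i,b_i}$ reaches the index $j$, i.e. those with $b_i\ge j$. So under the renaming $x_{i,j}'\mapsto x_{i,b_i+1}$ — more precisely, relabeling so the doubled variable slots into the consecutive run — these match. I would make this precise by fixing once and for all the bijection between $\{x_{k,m}\mid k,m\}\cup\{x_{i,j}'\}$ (the variables of $\wS[x_{i,j}']$, of cardinality $|\ba|+n+1$) and $\{x_{k,m}\mid 1\le m\le a_k+1+\delta_{k,i}\}$ (the variables for $\BM_{\ba+\be_i}$), sending $x_{i,j}'$ to the position $j$ and shifting $x_{i,m}$ for $m\ge j$ up by one — this is the ``$\cong$'' of the statement — and then checking the three pieces are preserved.

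Next I would treat $\dPol(I^{\vee\ba})$. Here is where Lemma~\ref{Alex dual} enters. By that lemma, $x^\bb$ is a minimal generator of $I^{\vee\ba}$ iff $x^{\tau_{\<i,a_i+2-j\>}(\bb)}$ is a minimal generator of $(I^{\tl\<i,j\>})^{\vee\,\ba+\be_i}$. Now $\dPol_{\ba+\be_i}$ assigns to a generator $x^\bc$ (with $\bc\preceq\ba+\be_i$ in the $i$th slot meaning $c_i\le a_i+1$) the monomial $\prod_k x_{k,a_k+1}\cdots x_{k,a_k-c_k+2}$ — a consecutive run anchored at the top index. The key arithmetic fact to verify is that $\tau_{\<i,a_i+2-j\>}$ is exactly the operation that, under $\dPol$, inserts a doubled copy of the variable at position $j$ from the top — which is precisely what $\infl_{(i,j)}$ does. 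Concretely: a generator $x^\bc$ of $I^{\vee\ba}$ gets sent by $\dPol$ to a run occupying top indices $a_i+1,\dots,a_i-c_i+2$; $\infl_{(i,j)}$ doubles $x_{i,j}$ exactly when $j$ lies in this run, i.e. when $a_i-c_i+2\le j$, i.e. $c_i\ge a_i+2-j$, i.e. (by the definition of $\tau_{\<i,a_i+2-j\>}$) exactly when $\tau_{\<i,a_i+2-j\>}$ raises the $i$th coordinate. So the composite $\dPol_{\ba+\be_i}\circ(-)^{\vee}\circ(-)^{\tl\<i,j\>}$ agrees with $\infl_{(i,j)}\circ\dPol_\ba\circ(-)^{\vee}$ on minimal generators, which is what we want. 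I would also note the degenerate case $c_i=a_i+1-j$ (where the lemma has $\tau_{\<i,a_i+2-j\>}$ fixing the coordinate and the minimal-generator set is unchanged on that slot) needs its own line, exactly as in the proof of Lemma~\ref{Alex dual}.

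Finally the link part $L_\ba=(\prod_{m=1}^{a_l+1}x_{l,m}\mid l)$ is straightforward: for $l\ne i$ the generator $\prod_{m=1}^{a_l+1}x_{l,m}$ is untouched by both sides, while for $l=i$, $\infl_{(i,j)}$ turns $\prod_{m=1}^{a_i+1}x_{i,m}$ into $x_{i,j}'\cdot\prod_{m=1}^{a_i+1}x_{i,m}$, which under the chosen renaming is exactly $\prod_{m=1}^{a_i+2}x_{i,m}$, the corresponding generator of $L_{\ba+\be_i}$. Putting the three comparisons together gives $\infl_{(i,j)}(\BM_\ba(I))=\BM_{\ba+\be_i}(I^{\tl\<i,j\>})$ after the identification of variables, completing the proof. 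The main obstacle I anticipate is purely bookkeeping: pinning down the single variable-identification that simultaneously makes all three summands correspond, and checking the off-by-one boundary cases ($b_i$ vs. $b_i+1$ at the threshold $j$, and the $c_i=a_i+1-j$ degenerate case in the dual part) are consistent with it; once the right dictionary is fixed, each piece is a one-line verification using the definitions of $\tau$, $\rho$, $\Pol$, $\dPol$ and Lemma~\ref{Alex dual}.
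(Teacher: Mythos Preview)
Your proposal is correct and follows essentially the same approach as the paper: decompose $\BM_\ba(I)$ into the three summands $\Pol(I)$, $\dPol(I^{\vee\ba})$, and the ``link'' ideal, fix the variable identification that shifts $x_{i,m}\mapsto x_{i,m+1}$ for $m\ge j$ and slots the doubled variable into position $j$, and then verify each summand matches, invoking Lemma~\ref{Alex dual} for the dual part. The paper packages the identification as an explicit pair of maps $\phi,\psi$ on monomials, but the content of the verification is identical to yours.
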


\begin{proof}
Note that $\BM_{\ba+\be_i}(I^{\tl \<i,j\>})$ is an ideal of $\hS:=\wS[x_{i,a_i+2}]$.  
Consider the injective ring homomorphism $\phi: \wS \to \hS$ defined by 
$$
\phi(x_{l,m})=\begin{cases}
x_{i,m +1} & \text{if $l=i$ and $m \geq j$,}\\
x_{l,m} & \text{otherwise.}\\
\end{cases}
$$
For a squarefree monomial $x^\bb \in \wS$, set 
$$
\psi(x^\bb):=\begin{cases}
x_{i, j} \cdot \phi (x^\bb) & \text{if $x_{i,j}$ divides $x^\bb$,}\\
\phi(x^\bb) & \text{otherwise.}
\end{cases}
$$
Note that $\psi$ is not a ring homomorphism, just a map
from the set of squarefree monomials of $\wS$ to that of $\hS$.
Clearly, we have 
$$\opn{\mathsf{pol}}_{\ba+\one+\be_i}(I^{\tl \<i,j\>}) = 
( \, \psi(x^\bb) \mid \text{$x^\bb$ is a minimal generator of $\Pol(I)$} \, )$$
and 
$$\prod_{m=1}^{a_l+1}x_{l,m} = \psi(\prod_{m=1}^{a_l}x_{l,m}).$$

By Lemma~\ref{Alex dual},  
$(I^{\tl \<i,j\>})^{\vee \ba+\be_i}= (x^{\tau_{\<i, a_i +2 -j\>}(\bb)} \mid 
\text{$x^\bb$ is a minimal generator of $I^{\vee \ba}$})$.  
On the other hand, for a monomial $\bb \in \NN$ with $\bb \preceq \ba$, it is easy to check that 
$$\opn{\mathsf{pol}}^{\ba+\one+\be_i}(x^{\tau_{\<i, a_i+2-j\>}(\bb)})
 = \psi(\dPol(x^\bb)).$$
Hence we have 
$$\opn{\mathsf{pol}}^{\ba+\one+\be_i}
((I^{\tl \<i,j\>})^{\vee \ba+\be_i}) = 
( \, \psi(x^\bb) \mid \text{$x^\bb$ is a minimal generator of $\dPol(I^{\vee \ba})$} \, ).$$
Combining the above facts, we get 
$$\BM_{\ba+\be_i}(I^{\tl \<i,j\>})=
(\, \psi(x^\bb) \mid \text{$x^\bb$ is a minimal generator of 
$\BM_{\ba}(I)$}\,).$$

On the other hand, $\psi$ induces the inflation at $x_{i,j}$.  
So we are done. 
\end{proof}

By a similar argument as Corollary~\ref{consecutive}, we can prove the following. 
This time, we can not ignore the case there is a variable $x_i$ which divides none 
of the minimal generators of a monomial ideal.    

\begin{cor}
Modulo 1-vertex inflations, any Bier-Murai sphere is obtained from $\BBM_\ba(I)$ of 
a monomial ideal $I \subset S$ satisfying the following conditions: 
\begin{itemize}
\item[(i)]  Let $x^{\ba_1}, \ldots, x^{\ba_r}$ be the minimal generators of $I$.  
If a variable $x_i$ divides some of $x^{\ba_1}, \ldots, x^{\ba_r}$, 
there is a positive integer $b_i$ such that
$\{\, (\ba_l)_i \mid 1 \le l \le r \, \}$ is $\{ \, 1, 2, \ldots, b_i \, \}$  
or $\{ \, 0,1, \ldots, b_i \, \}$ 
\item[(ii)] $\ba=\ba_1 \vee \ldots \vee \ba_r$.  
\end{itemize}
\end{cor}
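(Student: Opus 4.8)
The plan is to combine Proposition~\ref{BM} with the ``compression'' argument from the proof of Corollary~\ref{consecutive}. By Proposition~\ref{BM}, applying a $1$-vertex inflation $\infl_{(i,j)}$ (with $1\le j\le a_i+1$) to a Bier--Murai sphere $\BBM_\ba(I)$ produces $\BBM_{\ba+\be_i}(I^{\tl\<i,j\>})$. Thus, given an arbitrary Bier--Murai sphere $\BBM_\bb(J)$ with $J\in\mod_\bb S$ a monomial ideal, it is enough to produce a monomial ideal $I$ satisfying conditions (i) and (ii), a vector $\ba$, and a finite sequence $(i_1,j_1),\dots,(i_N,j_N)$ such that applying the functors $(-)^{\tl\<i_1,j_1\>},\dots,(-)^{\tl\<i_N,j_N\>}$ to $I$ in turn yields $J$, the determination vectors match ($\bb=\ba+\be_{i_1}+\cdots+\be_{i_N}$), and at the $k$-th step the constraint $1\le j_k\le(\text{current }a_{i_k})+1$ holds. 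Then Proposition~\ref{BM}, applied $N$ times, shows that $\BBM_\bb(J)$ is obtained from $\BBM_\ba(I)$ by $N$ successive $1$-vertex inflations.

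I would first construct $I$ by compressing the generator exponents of $J$ coordinate by coordinate. Write the minimal generators of $J$ as $x^{\bb_1},\dots,x^{\bb_r}$, and for each $i\in[n]$ put $V_i:=\{\,(\bb_l)_i\mid 1\le l\le r\,\}=\{v_1<\cdots<v_{k_i}\}$. Set $W_i:=\{0,1,\dots,k_i-1\}$ if $v_1=0$, and $W_i:=\{1,2,\dots,k_i\}$ if $v_1\ge1$; in either case replace, in every $\bb_l$, the $i$-th coordinate by its image under the unique order-preserving bijection $V_i\to W_i$. This produces a monomial ideal $I$ whose minimal generators $x^{\ba_1},\dots,x^{\ba_r}$ have ``consecutive'' degrees as in (i), and we take $\ba:=\ba_1\vee\cdots\vee\ba_r$, so that (ii) holds.

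It remains to recover $J$ from $I$ by slides, which I would do one coordinate at a time since slides on distinct coordinates do not interact. Fix $i$; on the level of $i$-th coordinates, $\tau_{\<i,j\>}$ sends a finite set $U\subset\NN$ to $\{\,u\in U\mid u<j\,\}\cup\{\,u+1\mid u\in U,\ u\ge j\,\}$, i.e.\ it ``opens a gap at $j$'' and preserves cardinality, while each slide raises $a_i$ by $1$. Maintaining the invariant $a_i\ge\max(\text{current value set})$, one can (a) open the gaps of $V_i$ (shifting the whole set up first, by slides at $j=1$, when $v_1\ge1$), always with $j\le a_i$, and (b) finish by ``top slides'' at $j=a_i+1$, which fix every generator but raise $a_i$, until $a_i=b_i$. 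A short induction (on $\max V_i$, say) shows this is always possible within the allowed range; note that the total number of slides needed on coordinate $i$ is automatically $b_i-\max W_i$, matching $\bb$.

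The main obstacle is the bookkeeping of variables $x_i$ that divide none of the minimal generators of $J$, i.e.\ those $i$ with $V_i=\{0\}$. For such $i$ the compressed set is $W_i=\{0\}$, forcing the ``natural'' value $a_i=0$, which violates the standing convention $a_i\ge1$; and, in contrast to Corollary~\ref{consecutive}, where one simply passes to $\kk[x_1,\dots,x_{i-1},x_{i+1},\dots,x_n]$, here the variable cannot be discarded, since $\BM_\ba(I)$ genuinely involves all $n$ variables and the vector $\ba$ through $I^{\vee\ba}$ (via Lemma~\ref{Alex dual}), through $\Pol$ and $\dPol$, and through the ``full product'' generators $\prod_{m=1}^{a_l+1}x_{l,m}$. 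One therefore keeps $a_i=1$ for such $i$, reaching the target by top slides only, observes that condition (i) is vacuous for these $i$, and interprets (ii) accordingly; checking that this does not disturb the rest of the argument is the delicate point flagged in the remark preceding the statement.
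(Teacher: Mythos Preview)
Your proposal is correct and follows exactly the approach the paper intends: the paper's own proof consists of the single sentence ``By a similar argument as Corollary~\ref{consecutive}, we can prove the following. This time, we can not ignore the case there is a variable $x_i$ which divides none of the minimal generators of a monomial ideal,'' placed just before the statement. You have simply written out what that sentence means---use Proposition~\ref{BM} to translate $1$-vertex inflations into slides, then run the compression from Corollary~\ref{consecutive} backwards---and you have correctly isolated the one point where the argument differs from Corollary~\ref{consecutive}, namely that coordinates $i$ with $V_i=\{0\}$ cannot be dropped here because $\BM_\ba(I)$ depends on every coordinate of $\ba$.
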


\begin{exmp}
Consider the squarefree monomial ideal $I=(xyz, xw, yw)$ of $\kk[x,y,z,w]$. 
Then $I^{\vee\one} = (xy, xw, yw, zw)$, and $\BM_\one(I)$ is the ideal 
$$(x_1y_1z_1,x_1w_1,y_1w_1,x_2y_2, x_2w_2,y_2w_2, z_2w_2,x_1x_2,y_1y_2,z_1z_2,w_1w_2)$$
of $\kk[x_1,x_2,y_1,y_2,z_1,z_2,w_1,w_2]$. 
Set $I' := I^{\tl \<1,1\>}= (x^2yz, x^2w, yw)$. 
Then easy calculation shows that $I^{\vee \one+\be_1}=(xy,xw,yw, zw) \, (=I^{\vee \one})$. 
Hence $\BM_{\one+\be_1}(I')$ is the ideal 
$$(x_1x_2y_1z_1,x_1x_2w_1,y_1w_1,x_3y_2, x_3w_2,y_2w_2, z_2w_2,x_1x_2x_3,y_1y_2,z_1z_2,w_1w_2),$$
which is clearly equivalent to $\infl_{(1,1)}(\BM_\one(I))$. 
Next consider $\infl_{(1,2)}:\mod_{\ZZ^n}S \to \mod_{\ZZ^n}S$. While $\infl_{(1,2)}(I)=I=(xyz, xw, yw)$, 
its Alexander dual with respect to $\one+\be_1$ is $(x^2y, x^2w, yw,zw)$. Hence  
$\BM_{\one+\be_1}(\infl_{1,2}(I))$ is the ideal
$$(x_1y_1z_1,x_1w_1,y_1w_1,x_2x_3y_2, x_2x_3w_2,y_2w_2, z_2w_2,x_1x_2x_3,y_1y_2,z_1z_2,w_1w_2),$$
which coincides with $\infl_{(1,2)}(\BM_\one(I))$. 

In general, the following holds: Let $I \subset S$ be a positively $\ba$-determined ideal.   
Through the variable exchange $x_{i,j} \longleftrightarrow x_{i,a_i+1-j}$, 
$\BM_\ba(I)$ is equivalent to $\BM_\ba(I^{\vee \ba})$. In this context, 
$\infl_{(i,j)}(\BM_\ba(I))$ corresponds to $\infl_{(i,a_i+2-j)}(\BM_\ba(I^{\vee \ba}))$.

\end{exmp}

\end{document}